\newcommand{\mbf}[1]{\mathbf{ #1}}
\newcommand{\tnf}[1]{\textnormal{#1}}
\newcommand{\tbf}[1]{\textbf{#1}}
\newcommand{\mbs}[1]{\boldsymbol{#1}}
\newcommand{\mcl}[1]{\mathcal{#1}}
\newcommand{\mscr}[1]{\mathscr{#1}}
\newcommand{\R}{\mathbb{R}}
\newcommand{\N}{\mathbb{N}}
\newcommand{\norm}[1]{\left\lVert{#1}\right\rVert}
\newcommand{\ip}[2]{\left\langle #1, #2 \right\rangle}
\newcommand{\bmat}[1]{\begin{bmatrix}#1\end{bmatrix}}
\newcommand{\smallbmat}[1]{\left[\scriptsize\begin{smallmatrix}
		#1\end{smallmatrix} \right]}
\newcommand{\lbmat}[1]{\left[\!\!\begin{array}{l}#1\end{array}\!\!\right]}
\newcommand{\slbmat}[1]{\small\left[\!\!\begin{array}{l}#1\end{array}\!\!\right]}
\newcommand{\sclbmat}[1]{\scriptsize\left[\!\!\begin{array}{l}#1\end{array}\!\!\right]}
\newcommand{\tlbmat}[1]{\tiny\left[\!\!\!\!\begin{array}{l}#1\end{array}\!\!\!\!\right]}
\newcommand{\srbmat}[1]{\small\left[\!\!\begin{array}{r}#1\end{array}\!\!\right]}
\newcommand{\scbrray}[2]{{\scriptsize\left[\!\!\begin{array}{#1}#2\end{array}\!\!\right]}}
\newtheorem{thm}{Theorem}
\newtheorem{defn}[thm]{Definition}
\newtheorem{lem}[thm]{Lemma}
\newtheorem{prop}[thm]{Proposition}
\newtheorem{cor}[thm]{Corollary}
\newtheorem{example}[thm]{Example}
\let\bl\bigl
\let\bbl\Bigl
\let\bbbl\biggl
\let\br\bigr
\let\bbr\Bigr
\let\bbbr\biggr
\newcommand{\PIset}[0]{\mathbf{\Pi}}
\newcommand\blfootnote[1]{%
	\begingroup
	\renewcommand\thefootnote{}\footnote{#1}%
	\addtocounter{footnote}{-1}%
	\endgroup
}
\newcommand\Resize[2]{\resizebox{#1}{!}{\mbox{\ensuremath{\displaystyle #2}}}}
\title{\LARGE \bf
	Representation and Stability Analysis of 1D PDEs with Periodic Boundary Conditions
}
\author{Declan Jagt, Sergei Chernyshenko, Matthew Peet %
}
\begin{document}

	\maketitle
	\pagestyle{plain}

\begin{abstract}
	PDEs with periodic boundary conditions are frequently used to model processes in large spatial environments, assuming solutions to extend periodically beyond some bounded interval. However, solutions to these PDEs often do not converge to a unique equilibrium, but instead converge to non-stationary trajectories existing in the nullspace of the spatial differential operator (e.g. $\frac{\partial^2}{\partial x^2}$). To analyse this convergence behaviour, in this paper, it is shown how such trajectories can be modeled for a broad class of linear, 2nd order, 1D PDEs with periodic as well as more general boundary conditions, using the Partial Integral Equation (PIE) representation. In particular, it is first shown how any PDE state satisfying these boundary conditions can be uniquely expressed in terms of two components, existing in the image and the nullspace of the differential operator $\frac{\partial^2}{\partial x^2}$, respectively. An equivalent representation of linear PDEs is then derived as a PIE, explicitly defining the dynamics of both state components. Finally, a notion of exponential stability is defined that requires only one of the state components to converge to zero, and it is shown how this stability notion can be tested by solving a linear operator inequality. The proposed methodology is applied to two examples, demonstrating that exponential stability can be verified with tight bounds on the rate of decay.%
\end{abstract}

\blfootnote{\vspace*{-0.00cm}%
	\tbf{Acknowledgement:} This work was supported by National Science Foundation grant EPCN-2337751. \vspace*{-0.25cm}}


\section{INTRODUCTION}

Partial Differential Equations (PDEs) are used to model a variety of processes, including physical, biological, chemical, etc. 
For example, the evolution of the temperature distribution $\mbf{u}(t)$ in a rod can be modeled with the heat equation,
\begin{equation}\label{eq:heat_eq_intro}
	\mbf{u}_{t}(t,x)=\mbf{u}_{xx}(t,x),\enspace \mbf{u}(0,x)=\mbf{u}_{0}(x),\quad x\in(0,1).
\end{equation}
Of course, in order to obtain a unique solution, any PDE on a finite domain must be complemented by a set of boundary conditions. 
In practice, these boundary conditions are often chosen to represent behaviour of the system at actual boundaries of the physical environment -- e.g. imposing Dirichlet conditions $\mbf{u}(t,0)=\mbf{u}(t,1)=0$ if the temperature at the endpoints of the rod is known to be zero.
However, in many applications, the endpoints of the interval do not correspond to physical boundaries, but instead, the interval is used to represent an (infinitely) repeating segment of a larger domain. Assuming solutions in each segment to be identical, so that $\mbf{u}(t,x+n):=\mbf{u}(t,x)$ for $n\in\N$, this can be modeled using periodic boundary conditions, setting e.g. $\mbf{u}(t,0)=\mbf{u}(t,1)$ and $\mbf{u}_{x}(t,0)=\mbf{u}_{x}(t,1)$. 

Although periodic boundary conditions offer a crucial tool for modeling processes in larger spatial domains, analysing stability of equilibrium solutions for these conditions may be challenging. For example, any uniform distribution $\mbf{u}^*\equiv \beta$ for $\beta\in\R$ will satisfy the periodic conditions and $\mbf{u}^*_{xx}=0$, allowing infinitely many equilibrium states for the heat equation.
Moreover, these equilibria are non-isolated, in that any one equilibrium exists in an arbitrarily small neighborhood of another. As a result, behaviour of solutions is highly dependent on the initial conditions, with $\int_{0}^{1}\mbf{u}_{0}(x)dx=\beta$ implying $\lim_{t\to\infty}\mbf{u}(t)\equiv \beta$, complicating stability analysis.

Of course, non-isolated equilibria do not occur exclusively for periodic boundary conditions. Indeed, the uniform solutions $\mbf{u}^*\equiv \beta$ are also equilibria for the heat equation with Neumann conditions $\mbf{u}_{x}(t,0)=\mbf{u}_{x}(t,1)=0$. Moreover, any affine distribution $\mbf{u}^*(x)=\alpha x+\beta$ will be an equilibrium for the heat equation when using $\mbf{u}(t,1)=\mbf{u}(t,0)+\mbf{u}_{x}(t,0)$ and $\mbf{u}_{x}(t,0)=\mbf{u}_{x}(t,1)$.
The problem with all of these boundary conditions, of course, is that for $\mbf{u}$ in the space constrained by these conditions -- i.e. the PDE domain -- $\mbf{u}_{xx}=0$ does not imply $\mbf{u}=0$.
More precisely, the nullspace of the differential operator $\mscr{D}:=\frac{\partial^2}{\partial x^2}$ defining the heat equation is not trivial on the PDE domain.
By contrast, for Dirichlet conditions, this operator $\frac{\partial^2}{\partial x^2}$ is actually invertible on the PDE domain, admitting only a single equilibrium $\mbf{u}^*\equiv 0$.

Fortunately, most practical PDE models involve e.g. reaction or nonlinear terms that prohibit non-isolated equilibria, allowing stability properties to be accurately tested even for Neumann or periodic boundary conditions. 
For example, for reaction-diffusion equations with Neumann (and Dirichlet) conditions, stability conditions for observer-based controllers have been presented in~\cite{katz2020controlLMI,katz2022controlLMI} and~\cite{lhachemi2023SaturatedStabilization,lhachemi2023BoundaryStabilization}, and stabilization may also be performed using backstepping~\cite{deutscher2018backstepping,mathiyalagan2024backstepping}.
Similarly, stability for the Navier-Stokes equations with arbitrary boundary conditions can be tested using a Sum-Of-Squares (SOS) approach as in~\cite{chernyshenko2014SOS_NS,fuentes2022SOS_NS}, and for periodic boundary conditions, an observer can be designed as in~\cite{zayats2021ObserverNS,zhuk2023ObserverNS}.
More comprehensive frameworks for stability analysis of linear and nonlinear PDEs with general boundary conditions have been developed as well, using e.g. SOS~\cite{valmorbida2015StabilityPDE,ahmadi2016AnalysisPDEs}, or occupation measures and the moment-SOS hierarchy~\cite{magron2020MomentsControl,korda2022Moments,henrion2023MomentsPDEs}.

Using these various results, stability of equilibrium solutions can be tested for a broad class of PDEs. However, although analysing stability of a single equilibrium suffices for most practical applications, in doing so, certain insight on convergence behaviour of solutions may be lost. For example, for the heat equation with periodic boundary conditions, although the trivial solution $\mbf{u}^*\equiv 0$ is merely Lyapunov stable, each solution does converge asymptotically to an equilibrium $\beta:=\int_{0}^{1}\mbf{u}_{0}(x)dx$. Similarly, adding a reaction term as $\mbf{u}_{t}(t,x)=\mbf{u}_{xx}(t,x)+\mbf{u}(t,x)$, the equilibrium $\mbf{u}^*\equiv 0$ becomes unstable, but solutions do still converge as $\norm{\mbf{u}(t)-\beta(t)}_{L_{2}}\to 0$, where now $\beta(t):=e^{t}\int_{0}^{1}\mbf{u}_{0}(x)dx$.

Of course, this kind of convergence $\norm{\mbf{u}(t)-\beta(t)}_{L_{2}}\to 0$ could be tested by expanding solutions as $\mbf{u}(t,x)=\beta(t)+\sum_{i\in\N}c_{i}(t)\mbs{\phi}_{i}(x)$ for some basis $\{\mbs{\phi}_{i}\}$ -- as done in e.g. ~\cite{lhachemi2023BoundaryStabilization,fuentes2022SOS_NS,magron2020MomentsControl} -- and analysing the behaviour of $\mbf{u}(t)-\beta(t)=\sum_{i\in\N}c_{i}(t)\mbs{\phi}_{i}(x)$.
In this paper, however, we propose a framework that allows such analysis to be performed without the need for a basis expansion, 
by establishing a map $\mcl{T}$ such that $\mbf{u}(t)-\beta(t)=\mcl{T}\mbf{u}_{xx}(t)$, and explicitly modeling the dynamics of $\mcl{T}\mbf{u}_{xx}(t)$.
To achieve this, we will use the framework presented in e.g.~\cite{shivakumar2024GPDE}, in which it was shown that for a broad class of linear boundary conditions, including Dirichlet and Robin conditions, the differential operator $\frac{\partial^2}{\partial x^2}$ admits a unique inverse $\mcl{T}$ on the PDE domain. Moreover, this inverse
is defined by a Partial Integral (PI) operator, 
parameterized by polynomials $\mbs{T}_{1},\mbs{T}_{2}$ as
\begin{align*}
	\ \\[-2.05em]
	(\mcl{T}\mbf{v})(x)=\int_{a}^{x}\mbs{T}_{1}(x,\theta)\mbf{v}(\theta)d\theta +\int_{x}^{b}\mbs{T}_{2}(x,\theta)\mbf{v}(\theta)d\theta. \\[-1.95em]
\end{align*} 
Then, for a broad class of linear PDEs, we can define another PI operator $\mcl{A}$ such that $\mbf{u}$ solves the PDE if and only if $\mbf{v}=\mbf{u}_{xx}$ solves the Partial Integral Equation (PIE) \\[-1.0em]
\begin{equation*}
	\partial_{t} \mcl{T}\mbf{v}(t)=\mcl{A}\mbf{v}(t).
\end{equation*}
\ \\[-1.6em]
In this representation, the \textit{fundamental state} $\mbf{v}(t)\in L_{2}$ does not have to satisfy any boundary conditions or regularity constraints. Exploiting the algebraic properties of PI operators, as well as their polynomial parameterization, this allows a variety of problems such as stability analysis, optimal control, and optimal estimation for linear ODE-PDE systems to be posed as operator inequalities on PI operator variables~\cite{shivakumar2024thesis}, which can be solved using semidefinite programming~\cite{shivakumar2025PIETOOLS2024}.

Of course, constructing a PIE representation for PDEs with periodic boundary conditions is complicated by the fact that the differential operator $\frac{\partial^2}{\partial x^2}$ is not invertible on the resulting PDE domain. Indeed, $\frac{\partial^2}{\partial x^2}$ is not injective on this PDE domain, since its nullspace includes any uniform solution, $\mbf{u}\equiv \beta$.
Moreover, $\frac{\partial^2}{\partial x^2}$ is also not surjective onto $L_{2}$, since $\mbf{u}(t)$ will satisfy $\mbf{u}_{x}(t,0)=\mbf{u}_{x}(t,1)$ only if $\int_{0}^{1}\mbf{u}_{xx}(t,x)dx=0$, imposing constraints directly on the fundamental state $\mbf{v}(t)=\mbf{u}_{xx}(t)$.

Extending the PIE representation to support periodic as well as more general boundary conditions, then, we face several challenges. First, given any set of boundary conditions, how do we define the range of $\frac{\partial^2}{\partial x^2}$ on the resulting PDE domain?
Secondly, how do we account for the potentially nontrivial nullspace of $\frac{\partial^2}{\partial x^2}$ on the PDE domain, identifying operators $\mcl{T}$ and $\mcl{Q}$ such that for any $\mbf{u}(t)$ in this domain, $\frac{\partial^2}{\partial x^2}\mcl{Q}\mbf{u}(t)=0$ and $\mbf{u}(t)=\mcl{T}\mbf{u}_{xx}(t)+\mcl{Q}\mbf{u}(t)$? 
Finally, how do we construct a PIE modeling the dynamics of not only $\mbf{v}(t)=\mbf{u}_{xx}(t)$ but also $\mcl{Q}\mbf{u}(t)$, and how do we test stability in this representation?

In the remainder of this paper, we resolve each of these challenges, extending the PIE representation to support PDEs with periodic as well as more general boundary conditions. To this end, in Sec.~\ref{sec:Tmap} we first show how for a general linear PDE domain $X\subseteq W_{2}^{2,n}$, we can define the image of $\frac{\partial^2}{\partial x^2}$ on $X$ as an $L_{2}$-subspace $Y$. Next, we show how we can define a functional $\mcl{F}$ and PI operators $\mcl{T}_{0},\mcl{T}_{1}$ such that $\mbf{u}=\mcl{T}_{0}\mcl{F}\mbf{u}+\mcl{T}_{1}\mbf{u}_{xx}$ for all $\mbf{u}\in X$ -- where now $\frac{\partial^2}{\partial x^2}(\mcl{T}_{0}\mcl{F}\mbf{u})=0$. 
Using this relation, in Sec.~\ref{sec:PDE2PIE}, we then derive an equivalent PIE representation of a class of linear PDEs, modeling the fundamental state $\mbf{v}(t)=(\mcl{F}\mbf{u}(t),\mbf{u}_{xx}(t))\in\R^{m}\times Y$. 
Although this PIE representation is derived only for linear, 2nd-order PDEs, such a PIE representation can be similarly derived for higher-order PDEs, coupled ODE-PDEs, and nonlinear PDEs as well.
Finally, in Sec.~\ref{sec:LPI}, we show how stability in this PIE representation can be tested as a linear PI operator inequality, allowing convergence of solutions in both the sense $\norm{\mbf{u}(t)}_{L_{2}}\to 0$ and $\norm{\mbf{u}(t)-\mcl{T}_{0}\mcl{F}\mbf{u}(t)}_{L_{2}}\to 0$ to be tested.
We solve this operator inequality for a heat equation and a wave equation in Sec.~\ref{sec:examples}.

\section{Notation}

For a given interval $[a,b]\subset\R$, let $L_{2}^{n}[a,b]$ denote the Hilbert space of $\R^{n}$-valued square-integrable functions on $[a,b]$, where we omit the domain when clear from context. Denote $\partial_{x}^2:=\frac{\partial^2}{\partial x^2}$, and define the Sobolev subspace
\begin{equation*}
	W_{2}^{2,n}[a,b]:=\bl\{\mbf{u}\in L_{2}^{n}[a,b]\mid \mbf{u}_{x},\mbf{u}_{xx}\in L_{2}[a,b]\br\},
\end{equation*}
where $\mbf{u}_{x}=\partial_{x}\mbf{u}$ and $\mbf{u}_{xx}=\partial_{x}^2\mbf{u}$.
Let $\R^{m\times n}[x,\theta]$ denote the space of $m\times n$ matrix-valued polynomials in variables $x,\theta$. For $\mbs{R},\mbs{Q}\in L_{2}^{m\times n}[a,b]$, define the multiplier operator $\tnf{M}_{\mbs{R}}:\R^{n}\to L_{2}^{m}$ and functional $\smallint_{a}^{b}[\mbs{Q}]:L_{2}^{n}\to\R^{m}$ by
$(\tnf{M}_{\mbs{R}}v)(x):=\mbs{R}(x)v$ and $\smallint_{a}^{b}[\mbs{Q}]\mbf{u}:=\int_{a}^{b}\mbs{Q}(x)\mbf{u}(x)dx$.

\section{A Bijection Between $\mbf{u}$ and $\mbf{u}_{xx}$}\label{sec:Tmap}

In this section, we provide the main technical contribution of the paper, proving that for a PDE domain of the form 
\begin{equation}\label{eq:Xset}
	X:=\bbl\{\mbf{u}\in W_{2}^{2,n}\,\bbl|\, E\smallbmat{\mbf{u}(a)\\\mbf{u}(b)\\\mbf{u}_{x}(a)\\\mbf{u}_{x}(b)}+\int_{a}^{b}\!\!\mbs{F}(x)\mbf{u}(x)dx=0\bbr\},
\end{equation}
the map $\partial_{x}^2:X\to L_{2}^{n}$ is invertible only if $\{E,\mbs{F}\}\in\R^{2n\times 4n}\times L_{2}^{2n\times n}$ is such that the matrix $G_{E,\mbs{F}}\in\R^{2n\times 2n}$ (defined in the Subsec.~\ref{sec:Tmap:Yset}) is of full rank. Moreover, if this condition fails, we can still define an $L_{2}$ subspace,
\begin{equation}\label{eq:Yset_K}
	Y:=\{\mbf{v}\in \R^{m}\times L_2^{n} \mid \mcl{K}\mbf{v}=0\},
\end{equation}
for some functional $\mcl{K}:\R^{m}\times L_{2}^{n}\to\R^{m}$,
such that $X$ is isomorphic to $Y$, 
with $\mscr{D}:X\to Y$ defined by a differential operator and the inverse $\mcl{T}=\mscr{D}^{-1}:Y\to X$ defined by an integral operator with semiseparable kernel. 
This result allows us to define an equivalent parameterization of the state of the PDE free of boundary conditions and Sobolev regularity constraints.
We prove the result in three steps:
 
\begin{enumerate}
	\item First, in Subsection~\ref{sec:Tmap:Yset}, we show that for any $\{E,\mbs{F}\}$, we can define $\mbs{K}\in L_{2}^{m\times n}$ such that $\mbf{u}\in X$ only if $\mbf{u}_{xx}\in \hat{Y}:=\{\mbf{v}\in L_{2}^{n}\mid \smallint_{a}^{b}[\mbs{K}]\mbf{v}=0\}$.
	
	\item Next, in Subsection~\ref{sec:Tmap:extraBCs}, we show how we can define a functional $\mcl{F}$ and an integral operator $\mcl{T}_{1}$ such that for all $\mbf{v}\in \hat{Y}$, $\mcl{T}_{1}\mbf{v}\in X$ and $\sclbmat{\mcl{F}\\\partial_{x}^2}\mcl{T}_{1}\mbf{v}=\sclbmat{0\\\mbf{v}}$. 
	
	\item Finally, in Subsection~\ref{sec:Tmap:Tmap}, we define $\mcl{T}_{0}$ such that for all $v_{0}\in\R^{m}$, $\mcl{T}_{0}v_{0}\in X$ and $\sclbmat{\mcl{F}\\\partial_{x}^2}\mcl{T}_{0}v_{0}=\sclbmat{v_{0}\\0}$. Defining $\mscr{D}:=\sclbmat{\mcl{F}\\\partial_{x}^2}$ and $\mcl{T}:=\bmat{\mcl{T}_{0}&\mcl{T}_{1}}$, it follows that $\mbf{u}=\mcl{T}\mscr{D}\mbf{u}$ for all $\mbf{u}\in X$.
\end{enumerate}

\subsection{The Range of $\partial_{x}^2$ on the PDE Domain}\label{sec:Tmap:Yset}

Consider the subspace $X\subseteq W_{2}^{2,n}$ constrained by $2n$ linear boundary conditions as in~\eqref{eq:Xset}. 
In order to define an inverse of the differential operator $\partial_{x}^2:X\to L_{2}^{n}$ on this subspace, we first need to define the range of $\partial_{x}^2$ on $X$, which may be a proper subspace of $L_{2}^{n}$. For example, if $X$ imposes the periodic condition $\mbf{u}_{x}(a)=\mbf{u}_{x}(b)$, then $\mbf{u}\in X$ implies $\int_{a}^{b}\mbf{u}_{xx}(x)dx=0$, thus restricting the range of $\partial_{x}^2$ on $X$. In this subsection we show that, more generally, for any $\{E,\mbs{F}\}$, we can define an associated $\mbs{K}\in L_{2}^{m\times n}$ such that $\mbf{u}\in X$ implies $\int_{a}^{b}\mbs{K}(x)\mbf{u}_{xx}(x)dx=0$.
Naturally, this first requires representing the boundary conditions in terms of the second-order derivative, $\mbf{u}_{xx}$. We define such a representation of the boundary conditions in terms of a matrix $G_{E,\mbs{F}}$ and function $\mbs{H}_{E,\mbs{F}}$ as follows.

\begin{defn}\label{defn:Gmat}
For given $E\in \R^{m\times 4n}$ and $\mbs{F}\in L_{2}^{m\times n}$, we define $G_{E,\mbs{F}}\in \R^{m \times 2n}$ and $\mbs{H}_{E,\mbs{F}}\in L_2^{m\times n}$ by
	\begin{align*}
		&G_{E,\mbs{F}}:=E\scbrray{cc}{I_{n}&0_{n}\\I_{n}&(b-a)I_{n}\\0_{n}&I_{n}\\0_{n}&I_{n}}+\int_{a}^{b}\mbs{F}(x)\bmat{I_{n}&(x-a)I_{n}}dx,\\
		&\mbs{H}_{E,\mbs{F}}(x):=-E{\sclbmat{0_{n}\\(b-x)I_{n}\\0_{n}\\I_{n}}}-\int_{x}^{b}(\theta-x)\mbs{F}(\theta)d\theta,
	\end{align*}
	and we define the subspace $X_{E,\mbs{F}}\subseteq W_{2}^{2,n}$ by
	\begin{align*}
		&X_{E,\mbs{F}}:=	\\[-0.6em]
		&\quad\bbl\{\mbf u\in W_{2}^{2,n}\,\bbl|\,		G_{E,\mbs{F}}
		{\slbmat{\mbf{u}(a)\\\mbf{u}_{x}(a)}}=\int_{a}^{b}\!\!\mbs{H}_{E,\mbs{F}}(x)\mbf{u}_{xx}(x)dx\bbr\}.
	\end{align*}
\end{defn}

\smallskip
The following lemma shows that the boundary conditions defining $X_{E,\mbs{F}}$ are in fact equivalent to those defining $X$.
\begin{lem}\label{lem:BC_expansion}
For $E\in \R^{m\times 4n}$ and $\mbs{F}\in L_{2}^{m\times n}$, let $X$ be as in~\eqref{eq:Xset}, and $X_{E,\mbs{F}}$ as in Defn.~\ref{defn:Gmat}. Then $X_{E,\mbs{F}}=X$.
\end{lem}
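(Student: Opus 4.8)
The plan is to show that the single boundary constraint defining $X$ in~\eqref{eq:Xset} and the constraint defining $X_{E,\mbs{F}}$ in Defn.~\ref{defn:Gmat} are \emph{equivalent} for every $\mbf{u}\in W_{2}^{2,n}$. Since both subspaces are carved out of the same ambient space $W_{2}^{2,n}$, proving this pointwise equivalence of constraints immediately yields $X=X_{E,\mbs{F}}$. The whole argument amounts to rewriting the data entering~\eqref{eq:Xset}---the four boundary vectors and the distributed term $\int_{a}^{b}\mbs{F}\mbf{u}$---purely in terms of the reduced data $\mbf{u}(a)$, $\mbf{u}_{x}(a)$, and $\mbf{u}_{xx}$.

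First I would apply the fundamental theorem of calculus twice. Integrating $\mbf{u}_{xx}$ gives $\mbf{u}_{x}(x)=\mbf{u}_{x}(a)+\int_{a}^{x}\mbf{u}_{xx}(\theta)\,d\theta$, and integrating once more (collapsing the resulting double integral by Fubini) gives $\mbf{u}(x)=\mbf{u}(a)+(x-a)\mbf{u}_{x}(a)+\int_{a}^{x}(x-\theta)\mbf{u}_{xx}(\theta)\,d\theta$. Evaluating at $x=b$ then expresses the entire boundary vector as
\begin{equation*}
\sclbmat{\mbf{u}(a)\\\mbf{u}(b)\\\mbf{u}_{x}(a)\\\mbf{u}_{x}(b)}=\scbrray{cc}{I_{n}&0_{n}\\I_{n}&(b-a)I_{n}\\0_{n}&I_{n}\\0_{n}&I_{n}}\slbmat{\mbf{u}(a)\\\mbf{u}_{x}(a)}+\int_{a}^{b}\sclbmat{0_{n}\\(b-\theta)I_{n}\\0_{n}\\I_{n}}\mbf{u}_{xx}(\theta)\,d\theta,
\end{equation*}
which is exactly the matrix kernel appearing in the $E$-part of $G_{E,\mbs{F}}$ and in $\mbs{H}_{E,\mbs{F}}$.

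Next I would substitute these identities into~\eqref{eq:Xset}. The $E$-term splits into one piece multiplying $\slbmat{\mbf{u}(a)\\\mbf{u}_{x}(a)}$ and one piece integrated against $\mbf{u}_{xx}$. Substituting the expression for $\mbf{u}(x)$ into the distributed term produces $\int_{a}^{b}\mbs{F}(x)\bmat{I_{n}&(x-a)I_{n}}dx\,\slbmat{\mbf{u}(a)\\\mbf{u}_{x}(a)}$ together with the iterated integral $\int_{a}^{b}\mbs{F}(x)\int_{a}^{x}(x-\theta)\mbf{u}_{xx}(\theta)\,d\theta\,dx$. The one nontrivial manipulation is a single application of Fubini to this iterated integral: exchanging the order of integration over the triangle $a\le\theta\le x\le b$ turns it into $\int_{a}^{b}\bl(\int_{\theta}^{b}(x-\theta)\mbs{F}(x)\,dx\br)\mbf{u}_{xx}(\theta)\,d\theta$. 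Collecting the two coefficients of $\slbmat{\mbf{u}(a)\\\mbf{u}_{x}(a)}$ reproduces $G_{E,\mbs{F}}$ verbatim, while collecting the two kernels multiplying $\mbf{u}_{xx}(\theta)$ reproduces $-\mbs{H}_{E,\mbs{F}}(\theta)$; rearranging the resulting identity $G_{E,\mbs{F}}\slbmat{\mbf{u}(a)\\\mbf{u}_{x}(a)}-\int_{a}^{b}\mbs{H}_{E,\mbs{F}}(\theta)\mbf{u}_{xx}(\theta)\,d\theta=0$ gives precisely the defining constraint of $X_{E,\mbs{F}}$.

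Every step above is a reversible algebraic identity valid for arbitrary $\mbf{u}\in W_{2}^{2,n}$, so the two constraints are satisfied by the same functions and $X=X_{E,\mbs{F}}$. I expect the main obstacle to be purely bookkeeping rather than analysis: keeping the integration limits straight through the Fubini swap, and verifying that the $(b-\theta)I_{n}$ factor coming from the $E$-term and the $(x-\theta)$ factor coming from the $\mbs{F}$-term, together with their signs, assemble into the single kernel $\mbs{H}_{E,\mbs{F}}$. No analytic subtlety arises, since $\mbf{u}\in W_{2}^{2,n}$ guarantees $\mbf{u}_{xx}\in L_{2}$ and thereby justifies every integral and the interchange of integration order.
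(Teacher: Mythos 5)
Your proposal is correct and follows essentially the same route as the paper's own proof: Taylor's theorem with integral form of the remainder to express $\mbf{u}(b)$, $\mbf{u}_{x}(b)$, and the distributed term via $\mbf{u}(a)$, $\mbf{u}_{x}(a)$, $\mbf{u}_{xx}$, followed by a Fubini swap over the triangle $a\leq\theta\leq x\leq b$ and collection of the coefficients into $G_{E,\mbs{F}}$ and $-\mbs{H}_{E,\mbs{F}}$. The sign and kernel bookkeeping in your argument matches the paper's Definition~\ref{defn:Gmat} exactly, so no gap remains.
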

\begin{proof}
	The result follows using Taylor's theorem with integral form of the remainder, by which, for any $\mbf{u}\in W_{2}^{2,n}$,
	\begin{equation*}
		\mbf{u}(x)=\mbf{u}(a) +(x-a)\mbf{u}_{x}(a) +\int_{a}^{x}(x-\theta)\mbf{u}_{xx}(\theta)d\theta.
	\end{equation*}
	Using this identity, the values of $\smallbmat{\mbf{u}(b)\\\mbf{u}_{x}(b)}$ and $\int_{a}^{b}\mbs{F}(x)\mbf{u}(x)dx$ in the definition of $X$ can all be expressed in terms of only $\mbf{u}(a)$, $\mbf{u}_{x}(a)$, and $\mbf{u}_{xx}(x)$, yielding the proposed representation of $X_{E,\mbs{F}}$ in terms of $G_{E,\mbs{F}}$ and $\mbs{H}_{E,\mbs{F}}$. A full proof is given in Lem.~\ref{lem:BC_expansion_appx}, in Appx.~\ref{appx:proofs}.
\end{proof}

By Lem.~\ref{lem:BC_expansion}, the boundary conditions on $\mbf{u}\in X$ can be equivalently expressed in terms of some linear combination of the lower-boundary values $(\mbf{u}(a),\mbf{u}_{x}(a))$, and some functional of $\mbf{u}_{xx}$. 
Here, if the matrix $G_{E,\mbs{F}}$ is of full rank -- as is the case for e.g. Dirichlet conditions -- then all boundary conditions will involve the boundary values $(\mbf{u}(a),\mbf{u}_{x}(a))$, not imposing any constraint directly on $\mbf{u}_{xx}$.
As such, the differential operator $\partial_{x}^2:X=X_{E,\mbs{F}}\to L_{2}^{n}$ is surjective in this case, and in fact, we can define an inverse $\mcl{T}:L_{2}^{n}\to X$ as in e.g.~\cite{shivakumar2024GPDE}.
However, if $G_{E,\mbs{F}}$ is not of full rank -- as is the case for e.g. periodic conditions -- certain boundary conditions may impose constraints directly on $\mbf{u}_{xx}$. To isolate these constraints, we partition the matrix $G_{E,\mbs{F}}$ defining $X_{E,\mbs{F}}$ into full-rank and zero-rank parts as follows.

\begin{lem}\label{lem:BC_split}
For any $E\in \R^{2n\times 4n},\mbs{F}\in L_{2}^{2n\times n}$, there exists $0\leq m\leq 2n$ and an invertible $J\in\R^{2n\times 2n}$ such that 
\begin{equation*}
	JG_{E,\mbs{F}}=G_{JE,J\mbs{F}}=G_{\tlbmat{E_1\\E_2},\tlbmat{\mbs{F}_1\\ \mbs{F}_2}}=\bmat{G_{E_1,\mbs{F}_1}\\0_{m\times 2n}},
\end{equation*}
where $G_{E_{1},\mbs{F}_{1}}$ has full row rank and $X=X_{\tlbmat{E_{1}\\E_{2}},\tlbmat{\mbs{F}_{1}\\\mbs{F}_{2}}}$.
\end{lem}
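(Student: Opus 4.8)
The plan is to exploit two facts: that the map $\{E,\mbs{F}\}\mapsto G_{E,\mbs{F}}$ is linear under left-multiplication, and that Lem.~\ref{lem:BC_expansion} lets me pass freely between the original description of $X$ in~\eqref{eq:Xset} and its $X_{E,\mbs{F}}$ representation. The desired $J$ is then nothing more than the matrix implementing Gaussian elimination on $G_{E,\mbs{F}}$, so the whole argument is essentially finite-dimensional linear algebra.

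First I would record the key linearity property: for any $J\in\R^{2n\times 2n}$,
\[
	G_{JE,J\mbs{F}}=JG_{E,\mbs{F}}.
\]
This is immediate from Defn.~\ref{defn:Gmat}, since $G_{E,\mbs{F}}$ has the form $E\,M+\int_{a}^{b}\mbs{F}(x)N(x)\,dx$ for a fixed constant matrix $M$ and a fixed matrix-valued function $N(x)=\bmat{I_{n}&(x-a)I_{n}}$, both independent of $\{E,\mbs{F}\}$; replacing $E$ by $JE$ and $\mbs{F}$ by $J\mbs{F}$ simply lets $J$ factor out of both the matrix product and the integral.

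Next I would choose $J$ by standard row reduction. Let $r:=\operatorname{rank}(G_{E,\mbs{F}})$ and set $m:=2n-r$, so that $0\leq m\leq 2n$. Taking $J$ to be the invertible product of the elementary row operations that reduce $G_{E,\mbs{F}}$ to an echelon form with its $m$ zero rows collected at the bottom, I obtain $JG_{E,\mbs{F}}=\bmat{G_{1}\\0_{m\times 2n}}$ with $G_{1}\in\R^{r\times 2n}$ of full row rank. Writing $\tlbmat{E_1\\E_2}:=JE$ and $\tlbmat{\mbs{F}_1\\ \mbs{F}_2}:=J\mbs{F}$, partitioned so that $E_{1}$ consists of the first $r$ rows, the linearity identity from the previous step gives
\[
	JG_{E,\mbs{F}}=G_{JE,J\mbs{F}}=\bmat{G_{E_1,\mbs{F}_1}\\0_{m\times 2n}},
\]
whence $G_{E_1,\mbs{F}_1}=G_{1}$ has full row rank, as required.

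Finally I would verify $X=X_{\tlbmat{E_1\\E_2},\tlbmat{\mbs{F}_1\\ \mbs{F}_2}}$. By Lem.~\ref{lem:BC_expansion} applied to the pair $\{JE,J\mbs{F}\}$, the set $X_{JE,J\mbs{F}}$ coincides with $\bl\{\mbf{u}\in W_{2}^{2,n}\mid JE\,\smallbmat{\mbf{u}(a)\\\mbf{u}(b)\\\mbf{u}_{x}(a)\\\mbf{u}_{x}(b)}+\int_{a}^{b}J\mbs{F}(x)\mbf{u}(x)dx=0\br\}$. Since $J$ is invertible, the constraint appearing here holds if and only if the original constraint defining $X$ in~\eqref{eq:Xset} holds, so $X_{JE,J\mbs{F}}=X$, which is exactly the claim. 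I do not expect any serious obstacle: the only point requiring care is to keep the two equivalent descriptions of the boundary conditions straight, and to invoke Lem.~\ref{lem:BC_expansion} — rather than Defn.~\ref{defn:Gmat} directly — precisely when transferring the invertible change of rows from $G_{E,\mbs{F}}$ back to the solution set $X$.
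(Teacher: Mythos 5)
Your proof is correct and follows essentially the same route as the paper's: construct $J$ by Gaussian elimination so that $JG_{E,\mbs{F}}$ has a full-row-rank top block and zero bottom rows, use the fact that $G_{JE,J\mbs{F}}=JG_{E,\mbs{F}}$ (which the paper invokes implicitly via the definition of $G_{E,\mbs{F}}$), and then recover $X=X_{\tlbmat{E_1\\E_2},\tlbmat{\mbs{F}_1\\\mbs{F}_2}}$ from the invertibility of $J$ together with Lem.~\ref{lem:BC_expansion}. Your explicit statement of the linearity property $G_{JE,J\mbs{F}}=JG_{E,\mbs{F}}$ is a minor presentational improvement, but the argument is the same.
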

\begin{proof}
	Let $m$ be such that $G_{E,\mbs{F}}\in\R^{2n\times 2n}$ is of rank $2n-m$. Using Gauss-Jordan elimination, we can define an invertible matrix $J\in\R^{2n\times 2n}$ such that $JG_{E,\mbs{F}}$ is in reduced row echelon form, and therefore $JG_{E,\mbs{F}}=\sclbmat{A\\0_{m\times 2n}}$ for some $A\in\R^{2n-m\times 2n}$ of full row rank. Let $\{E_{1},\mbs{F}_{1}\}$ and $\{E_{2},\mbs{F}_{2}\}$ be given by the first $2n-m$ and last $m$ rows of $\{JE,J\mbs{F}\}$, respectively, so that $JE=\sclbmat{E_{1}\\E_{2}}$ and $J\mbs{F}=\sclbmat{\mbs{F}_{1}\\\mbs{F}_{2}}$. By definition of $G_{E,\mbs{F}}$, it follows that $G_{E_{1},\mbs{F}_{1}}=A$ and $G_{E_{2},\mbs{F}_{2}}=0_{m\times 2n}$. Finally, since $J$ is invertible, we have $\mbf{u}\in X$ if and only if
	\begin{equation*}
		JE\smallbmat{\mbf{u}(a)\\\mbf{u}(b)\\\mbf{u}_{x}(a)\\\mbf{u}_{x}(b)}+\int_{a}^{b}J\mbs{F}(x)\mbf{u}(x)dx=0,
	\end{equation*}
	whence $X_{E,\mbs{F}}=X_{JE,J\mbs{F}}=X_{\tlbmat{E_{1}\\E_{2}},\tlbmat{F_{1}\\F_{2}}}$. By Lem.~\ref{lem:BC_expansion}, it follows that $X=X_{\tlbmat{E_{1}\\E_{2}},\tlbmat{F_{1}\\F_{2}}}$.
\end{proof}

By Lem.~\ref{lem:BC_split}, if the boundary conditions defining $X$ are such that $G_{E,\mbs{F}}$ is rank-defficient, then we can partition the space as $X=X_{E_{1},\mbs{F}_{1}}\cap X_{E_{2},\mbs{F}_{2}}$, where now $G_{E_{1},\mbs{F}_{1}}$ is of full rank whereas $G_{E_{2},\mbs{F}_{2}}=0$. 
Here, the boundary conditions defined by $\{E_{2},\mbs{F}_{2}\}$ do not involve the boundary values $(\mbf{u}(a),\mbf{u}_{x}(a))$ at all, but rather, impose a constraint directly on $\mbf{u}_{xx}$. Consequently, the range of $\partial_{x}^2$ on $X_{E_{2},\mbs{F}_{2}}$ will be a proper subspace of $L_{2}^{n}$, yielding the following corollary.

\begin{cor}\label{cor:Yset}
	Let $E_{2}\in\R^{m\times 2n}$ and $\mbs{F}_{2}\in L_{2}^{m\times n}[a,b]$ be such that $G_{E_{2},\mbs{F}_{2}}=0$, and define
	\begin{equation*}
		\hat{Y}:=\bbbl\{\mbf{v}\in L_{2}^{n}\,\bbl|\, \int_{a}^{b}\mbs{H}_{E_{2},\mbs{F}_{2}}(x)\mbf{v}(x)dx=0\bbbr\}.
	\end{equation*}
	Then $\mbf{u}\in X_{E_{2},\mbs{F}_{2}}$ if and only if $\mbf{u}\in W_{2}^{2,n}$ and $\mbf{u}_{xx}\in \hat{Y}$.
\end{cor}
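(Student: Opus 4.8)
The plan is to observe that the claim follows almost immediately by unwinding the definition of $X_{E_2,\mbs{F}_2}$ from Defn.~\ref{defn:Gmat} under the hypothesis $G_{E_2,\mbs{F}_2}=0$. By definition, $\mbf{u}\in X_{E_2,\mbs{F}_2}$ precisely when $\mbf{u}\in W_{2}^{2,n}$ and
\[
G_{E_2,\mbs{F}_2}\slbmat{\mbf{u}(a)\\\mbf{u}_x(a)}=\int_{a}^{b}\mbs{H}_{E_2,\mbs{F}_2}(x)\mbf{u}_{xx}(x)\,dx.
\]
Substituting $G_{E_2,\mbs{F}_2}=0$, the left-hand side vanishes, so the defining condition reduces to $\int_{a}^{b}\mbs{H}_{E_2,\mbs{F}_2}(x)\mbf{u}_{xx}(x)\,dx=0$, entirely independent of the boundary values $(\mbf{u}(a),\mbf{u}_x(a))$. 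This is exactly the integral constraint appearing in the definition of $\hat{Y}$, so the equivalence should fall out by matching the two conditions.

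Concretely, for the forward implication I would take $\mbf{u}\in X_{E_2,\mbs{F}_2}$, note that $\mbf{u}\in W_{2}^{2,n}$ guarantees $\mbf{u}_{xx}\in L_{2}^{n}$ so that membership in $\hat{Y}\subseteq L_{2}^{n}$ is well-posed, and read off from the reduced condition that $\mbf{u}_{xx}$ satisfies $\int_{a}^{b}\mbs{H}_{E_2,\mbs{F}_2}(x)\mbf{u}_{xx}(x)\,dx=0$, i.e. $\mbf{u}_{xx}\in\hat{Y}$. For the converse, given $\mbf{u}\in W_{2}^{2,n}$ with $\mbf{u}_{xx}\in\hat{Y}$, the constraint $\int_{a}^{b}\mbs{H}_{E_2,\mbs{F}_2}(x)\mbf{u}_{xx}(x)\,dx=0$ holds by definition of $\hat{Y}$; combined with $G_{E_2,\mbs{F}_2}\slbmat{\mbf{u}(a)\\\mbf{u}_x(a)}=0$ (again because $G_{E_2,\mbs{F}_2}=0$), both sides of the defining equation of $X_{E_2,\mbs{F}_2}$ coincide, placing $\mbf{u}\in X_{E_2,\mbs{F}_2}$.

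I expect no substantive obstacle here: all the real work has already been done in Defn.~\ref{defn:Gmat} and Lem.~\ref{lem:BC_expansion}, which rewrote the original boundary conditions in the $\{G,\mbs{H}\}$-form precisely so that a zero-rank block $\{E_2,\mbs{F}_2\}$ decouples the boundary values from $\mbf{u}_{xx}$ and leaves only a constraint on the latter. The single point requiring a line of care is the well-posedness observation that $\mbf{u}_{xx}\in L_{2}^{n}$ whenever $\mbf{u}\in W_{2}^{2,n}$, which is immediate from the definition of the Sobolev space. Thus the proof amounts to the direct substitution above together with this remark.
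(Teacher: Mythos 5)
Your proof is correct and takes essentially the same route as the paper's: both reduce membership in $X_{E_2,\mbs{F}_2}$ to the integral constraint $\int_a^b \mbs{H}_{E_2,\mbs{F}_2}(x)\mbf{u}_{xx}(x)\,dx=0$ by substituting $G_{E_2,\mbs{F}_2}=0$ into the defining condition of $X_{E_2,\mbs{F}_2}$ (the paper phrases this step as an appeal to Lem.~\ref{lem:BC_expansion}, but the content is the same direct substitution you perform). No gap to report.
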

\begin{proof}
	Fix arbitrary $\mbf{u}\in W_{2}^{2,n}$. Since $G_{E_{2},\mbs{F}_{2}}=0$, it follows from Lem.~\ref{lem:BC_expansion} that $\mbf{u}\in X_{E_{2},\mbs{F}_{2}}$ if and only if $\int_{a}^{b}\mbs{H}_{E_{2},\mbs{F}_{2}}(x)\mbf{u}_{xx}(x)dx=0$, and thus $\mbf{u}_{xx}\in \hat{Y}$.
\end{proof}

By Cor.~\ref{cor:Yset}, the range of the differential operator $\partial_{x}^2$ on $X_{E_{2},\mbs{F}_{2}}$ is a subspace $\hat{Y}\subseteq L_{2}^{n}$, defined by some functional constraint. It follows that, unless $G_{E,\mbs{F}}$ is of full rank, the range of $\partial_{x}^2$ on $X=X_{E_{1},\mbs{F}_{1}}\cap X_{E_{2},\mbs{F}_{2}}$ will be a proper subspace $\hat{Y}\subset L_{2}^{n}$ as well. In the following subsection, we show that $\partial_{x}^{2}:X\to \hat{Y}$ is in fact surjective onto this range, defining a right-inverse $\mcl{T}_{1}:\hat{Y}\to X$ to $\partial_{x}^2$ as an integral operator with polynomial semi-separable kernel.

\subsection{A Right-Inverse of $\partial_{x}^2$ on the PDE Domain}\label{sec:Tmap:extraBCs}

Having defined the range $\hat{Y}\subseteq L_{2}^{n}$ of the differential operator $\partial_{x}^2$ on $X$, we now propose an inverse of the differential operator on this range, as an integral operator $\mcl{T}_{1}:\hat{Y}\to X$.
Of course, in general, the (right-)inverse of a differential operator is not unique. 
Indeed, letting
\begin{equation}\label{eq:Tmap_Green}
	(\mcl{T}_{1}\mbf{v})(x):=\alpha(x-a)+\beta +\int_{a}^{b}(x-\theta)\mbf{v}(\theta)d\theta,
\end{equation}
for any $\alpha,\beta\in\R^{n}$, the map $\mcl{T}_{1}:L_{2}^{n}\to W_{2}^{2,n}$ defines a right-inverse to $\partial_{x}^2$.
The challenge, then, is to choose $\alpha$ and $\beta$ in such a manner that $\mcl{T}_{1}\mbf{v}\in X=X_{E_{1},\mbs{F}_{1}}\cap X_{E_{2},\mbs{F}_{2}}$. Here, by Cor.~\ref{cor:Yset}, we already have $\mcl{T}_{1}\mbf{v}\in X_{E_{2},\mbs{F}_{2}}$ if and only if $\mbf{v}\in \hat{Y}$, for any $\alpha,\beta$. 
Now, to find $\alpha,\beta$ such that $\mcl{T}_{1}\mbf{v}\in X_{E_{1},\mbs{F}_{1}}$, note that $\mbf{u}=\mcl{T}_{1}\mbf{v}$ implies $\mbf{u}(a)=\beta$ and $\mbf{u}_{x}(a)=\alpha$, and therefore $\mbf{u}\in X_{E_{1},\mbs{F}_{1}}$ if and only if
\begin{equation*}
	G_{E_{1},\mbs{F}_{1}}{\slbmat{\beta\\\alpha}}
	=G_{E_{1},\mbs{F}_{1}}{\slbmat{\mbf{u}(a)\\\mbf{u}_{x}(a)}}
	=\int_{a}^{b}\!\mbs{H}_{E_{1},\mbs{F}_{1}}(x)\mbf{v}(x)dx.
\end{equation*}
Given $\mbf{v}$, these equations could be readily solved for $\alpha,\beta$, were it not for the fact that $G_{E_{1},\mbs{F}_{1}}\in\R^{2n-m\times 2n}$ has only $2n-m$ rows, whereas we have $2n$ unknowns ($\smallbmat{\beta\\\alpha}\in\R^{2n}$).
Therefore, unless the matrix $G_{E,\mbs{F}}$ is of full rank, this still leaves $m$ degrees of freedom when defining the operator $\mcl{T}_{1}$. Rather than choosing these variables arbitrarily, we propose to define a set of $m$ auxiliary boundary conditions, $\{E_{3},\mbs{F}_{3}\}$, and choose $\alpha,\beta$ such that $\mcl{T}_{1}\mbf{v}\in X_{\tlbmat{E_{1}\\E_{3}},\tlbmat{\mbs{F}_{1}\\\mbs{F}_{3}}}$. Naturally, this requires choosing $\{E_{3},\mbs{F}_{3}\}$ such that $G_{\tlbmat{E_{1}\\ E_{3}},\tlbmat{\mbs{F}_{1}\\\mbs{F}_{3}}}$ is invertible. The following lemma shows how this may be achieved, proving that, in fact, we may always let $E_{3}=0$.

\begin{lem}\label{lem:BC_extra}
	Let $\{E_{1},\mbs{F}_{1}\}\in\R^{2n-m\times 4n}\times L_{2}^{2n-m\times n}$, and let $P:=\smallbmat{P_{11}&P_{12}\\P_{21}&P_{22}}$ 
	for $P_{11},P_{21}\in\R^{n\times 2n-m}$ and $P_{12},P_{22}\in\R^{n\times m}$ be a permutation matrix such that $G_{E_{1},\mbs{F}_{1}}P=\bmat{I_{2n-m}&M}$ for some $M\in\R^{2n-m\times m}$. Define
	\begin{equation*}
		\mbs{F}_{3}(x):=\frac{2(2b+a-3x)}{(b-a)^2}P_{12}^T -\frac{6(b+a-2x)}{(b-a)^3}P_{22}^T.
	\end{equation*}
	Then, the matrix $G_{\tlbmat{E_{1}\\0},\tlbmat{\mbs{F}_{1}\\\mbs{F}_{3}}}\in\R^{2n\times 2n}$ is invertible.
\end{lem}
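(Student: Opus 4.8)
The plan is to right-multiply the target matrix by the permutation $P$ and show the result is block upper-triangular with identity diagonal blocks. First I would note that, by the definition of $G_{E,\mbs{F}}$, stacking $\tlbmat{E_1\\0}$ and $\tlbmat{\mbs{F}_1\\\mbs{F}_3}$ decomposes the matrix row-wise,
\[
  G_{\tlbmat{E_1\\0},\tlbmat{\mbs{F}_1\\\mbs{F}_3}}=\bmat{G_{E_1,\mbs{F}_1}\\ G_{0,\mbs{F}_3}},
\]
because the zero block annihilates the boundary-value term in the last $m$ rows, leaving only the integral contribution $G_{0,\mbs{F}_3}=\bmat{\int_a^b\mbs{F}_3(x)\,dx & \int_a^b (x-a)\mbs{F}_3(x)\,dx}$. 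Since $P$ is invertible, it suffices to prove that $G_{\tlbmat{E_1\\0},\tlbmat{\mbs{F}_1\\\mbs{F}_3}}P$ is invertible.

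Next I would evaluate the two integrals defining $G_{0,\mbs{F}_3}$. After the substitution $u=x-a$, integrating the degree-one coefficients in the given expression for $\mbs{F}_3$ is elementary, and I expect the coefficients to have been reverse-engineered precisely so that the ``diagonal'' integrals survive while the ``cross'' integrals vanish, giving $\int_a^b\mbs{F}_3(x)\,dx=P_{12}^T$ and $\int_a^b (x-a)\mbs{F}_3(x)\,dx=P_{22}^T$. Hence $G_{0,\mbs{F}_3}=\bmat{P_{12}^T & P_{22}^T}$, i.e. the transpose of the last $m$ columns of $P$.

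The conceptual key is then to exploit orthogonality of the permutation, $P^TP=I_{2n}$. Reading off the lower block row of this identity gives $P_{12}^TP_{11}+P_{22}^TP_{21}=0$ and $P_{12}^TP_{12}+P_{22}^TP_{22}=I_m$, so that $G_{0,\mbs{F}_3}P=\bmat{0 & I_m}$. Combined with the hypothesis $G_{E_1,\mbs{F}_1}P=\bmat{I_{2n-m} & M}$, this yields
\[
  G_{\tlbmat{E_1\\0},\tlbmat{\mbs{F}_1\\\mbs{F}_3}}P=\bmat{I_{2n-m} & M\\ 0 & I_m},
\]
which is unit upper block-triangular and hence invertible; invertibility of $P$ then gives the claim.

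I expect the main obstacle to be bookkeeping rather than any deep step: one must verify that the specific rational coefficients in $\mbs{F}_3$ integrate to exactly $P_{12}^T$ and $P_{22}^T$ (and not to some other linear combination), and extract the two correct block relations from $P^TP=I_{2n}$. Once the block form $\bmat{G_{E_1,\mbs{F}_1}\\ G_{0,\mbs{F}_3}}$ is established and paired with the permutation, the conclusion follows from a one-line Schur-complement/triangularity observation.
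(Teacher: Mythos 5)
Your proposal is correct and takes essentially the same route as the paper's own proof: both decompose the matrix as $\bmat{G_{E_1,\mbs{F}_1}\\G_{0,\mbs{F}_3}}$, verify via the integrals $\int_a^b\mbs{F}_3(x)\,dx=P_{12}^T$ and $\int_a^b(x-a)\mbs{F}_3(x)\,dx=P_{22}^T$ that $G_{0,\mbs{F}_3}=\bmat{P_{12}^T&P_{22}^T}$, use orthogonality of the permutation to get $G_{0,\mbs{F}_3}P=\bmat{0_{m\times 2n-m}&I_m}$, and conclude from the unit block-triangular form of $G_{\tlbmat{E_{1}\\0},\tlbmat{\mbs{F}_{1}\\\mbs{F}_{3}}}P$. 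The only cosmetic difference is that you extract the needed block identities from $P^TP=I_{2n}$, whereas the paper invokes $P^{-1}=P^T$ directly; the integral values you anticipated are exactly those the paper computes.
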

\begin{proof}
	Let $f(x)=\frac{2(2b+a-3x)}{(b-a)^2}$ and $g(x):=\frac{6(b+a-2x)}{(b-a)^3}$. Then, we note that $\int_{a}^{b}f(x)dx=-\int_{a}^{b}g(x)(x-a)=1$,
	whereas $\int_{a}^{b}f(x)(x-a)dx=\int_{a}^{b}g(x) dx =0$.
	By definition of the function $\mbs{F}_{3}$, it follows that
	\begin{align*}
		\int_{a}^{b}\mbs{F}_{3}(x)dx
		&=P_{12}^T,	&
		\int_{a}^{b}(x-a)\mbs{F}_{3}(x)dx
		&=P_{22}^T.
	\end{align*}
	and therefore (by Defn.~\ref{defn:Gmat}) $G_{0,\mbs{F}_{3}}=\bmat{P_{12}^T&P_{22}^T}$.
	Since $P$ is a permutation matrix, $P^{-1}=P^T$, and it follows that
	\begin{equation*}
		G_{0,\mbs{F}_{3}}P
		=\bmat{P_{12}^T&P_{22}^T}P	
		=\bmat{0_{m\times 2n-m}&I_{m}}.
	\end{equation*}
	Using this result we find
	\begin{equation*}
		G_{\tlbmat{E_{1}\\0},\tlbmat{\mbs{F}_{1}\\\mbs{F}_{3}}}P		
		=\srbmat{G_{E_{1},\mbs{F}_{1}}\\G_{0,\mbs{F}_{3}}}P
		=\bmat{I_{2n-m}&M\\0_{m\times 2n-m}&I_{m}}.
	\end{equation*}
	Since the matrix $\smallbmat{I&M\\0&I}$ is invertible, and the matrix $P$ is invertible, the matrix $G_{\tlbmat{E_{1}\\0},\tlbmat{\mbs{F}_{1}\\\mbs{F}_{3}}}$ is invertible as well.
\end{proof}

By Lem.~\ref{lem:BC_extra}, given $2n-m$ independent boundary conditions $\{E_{1},\mbs{F}_{1}\}$ such that $G_{E_{1},\mbs{F}_{1}}$ is of full row rank, we can introduce $m$ additional integral constraints defined by $\mbs{F}_{3}$, such that the the matrix $G_{\tlbmat{E_{1}\\0},\tlbmat{\mbs{F}_{1}\\\mbs{F}_{3}}}\in\R^{2n\times 2n}$ is invertible. Note that this choice of $\mbs{F}_{3}$ is not unique, and in many cases (if $m=n$) it suffices to simply let $\mbs{F}_{3}=I_{n}$ -- see e.g. Example~\ref{ex:Neumann_Tmap}. 
Given any such $\mbs{F}_{3}$, we then have sufficient conditions to constrain each of the degrees of freedom $\smallbmat{\alpha\\\beta}\in\R^{2n}$ in the definition of $\mcl{T}_{1}$ in~\eqref{eq:Tmap_Green}, allowing us to define an inverse to $\partial_{x}^2:X_{\tlbmat{E_{1}\\0},\tlbmat{\mbs{F}_{1}\\\mbs{F}_{3}}}\to L_{2}^{n}$ as follows.

\begin{prop}\label{prop:Tmap_hat}
	Let $\smallbmat{E_{1}\\0}\in\R^{2n\times 4n}$ and $\smallbmat{\mbs{F}_{1}\\\mbs{F}_{3}}\in L_{2}^{2n\times n}$ be such that $G_{\tlbmat{E_{1}\\0},\tlbmat{\mbs{F}_{1}\\\mbs{F}_{3}}}$ is invertible, and define
	\begin{equation*}
		\mbs{T}_{1}(x,\theta):=\bmat{I_{n}&\!\!\!(x-a)I_{n}}G_{\tlbmat{E_{1}\\0},\tlbmat{\mbs{F}_{1}\\\mbs{F}_{3}}}^{-1} \mbs{H}_{\tlbmat{E_{1}\\0},\tlbmat{\mbs{F}_{1}\\\mbs{F}_{3}}}(\theta).
	\end{equation*}
	Further define $\mcl{T}_{1}:L_{2}^{n}[a,b]\to W_{2}^{2,n}[a,b]$ by
	\begin{align*}
		(\mcl{T}_{1}\mbf{v})(x)&=\int_{a}^{b}\mbs{T}_{1}(x,\theta)\mbf{v}(\theta)d\theta +\int_{a}^{x}(x-\theta)\mbf{v}(\theta)d\theta,			
	\end{align*}
	for $\mbf{v}\in L_{2}^{n}[a,b]$.
	Then the following statements hold:
	\begin{enumerate}
		\item For all $\mbf{u}\in X_{\tlbmat{E_{1}\\0},\tlbmat{\mbs{F}_{1}\\\mbs{F}_{3}}}$, 
		$\mbf{u}=\mcl{T}_{1}\partial_{x}^2\mbf{u}$.
		
		\item For all $\mbf{v}\!\in L_{2}^{n}$, $\mcl{T}_{1}\mbf{v}\!\in X_{\tlbmat{E_{1}\\0},\tlbmat{\mbs{F}_{1}\\\mbs{F}_{3}}}$, and $\mbf{v}=\partial_{x}^2 \mcl{T}_{1}\mbf{v}$.
	\end{enumerate}
\end{prop}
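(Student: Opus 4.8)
The plan is to recognize that $\mcl{T}_{1}$ is just the canonical right-inverse of $\partial_{x}^2$ from~\eqref{eq:Tmap_Green} with a \emph{particular} choice of the two free constants, and then to pin those constants down using the invertibility of $G:=G_{\tlbmat{E_{1}\\0},\tlbmat{\mbs{F}_{1}\\\mbs{F}_{3}}}$ together with the characterization of the domain in Defn.~\ref{defn:Gmat}. The first step is to collapse the kernel integral. Since the factor $\bmat{I_{n}&(x-a)I_{n}}G^{-1}$ does not depend on the integration variable $\theta$, abbreviating $\mbs{H}:=\mbs{H}_{\tlbmat{E_{1}\\0},\tlbmat{\mbs{F}_{1}\\\mbs{F}_{3}}}$ we have
\begin{equation*}
	\int_{a}^{b}\mbs{T}_{1}(x,\theta)\mbf{v}(\theta)d\theta=\bmat{I_{n}&(x-a)I_{n}}G^{-1}\int_{a}^{b}\mbs{H}(\theta)\mbf{v}(\theta)d\theta.
\end{equation*}
Writing $\slbmat{\beta\\\alpha}:=G^{-1}\int_{a}^{b}\mbs{H}(\theta)\mbf{v}(\theta)d\theta\in\R^{2n}$, this term equals the affine function $\beta+(x-a)\alpha$, so $\mcl{T}_{1}\mbf{v}$ indeed has the homogeneous-plus-particular structure of~\eqref{eq:Tmap_Green}, now with $\alpha,\beta$ determined by $\mbf{v}$.

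For statement (2), the right-inverse property follows by differentiating twice: the affine term $\beta+(x-a)\alpha$ lies in the kernel of $\partial_{x}^2$, while $\partial_{x}^2\int_{a}^{x}(x-\theta)\mbf{v}(\theta)d\theta=\mbf{v}$ by the fundamental theorem of calculus, giving $\partial_{x}^2\mcl{T}_{1}\mbf{v}=\mbf{v}$ (and $\mcl{T}_{1}\mbf{v}\in W_{2}^{2,n}$ is immediate from $\mbf{v}\in L_{2}^{n}$). For membership I would evaluate $\mbf{u}:=\mcl{T}_{1}\mbf{v}$ and $\mbf{u}_{x}$ at $x=a$: the particular integral and its first derivative both vanish there, leaving $\mbf{u}(a)=\beta$ and $\mbf{u}_{x}(a)=\alpha$, i.e. $\slbmat{\mbf{u}(a)\\\mbf{u}_{x}(a)}=G^{-1}\int_{a}^{b}\mbs{H}(\theta)\mbf{v}(\theta)d\theta$. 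Multiplying by $G$ and using $\mbf{u}_{xx}=\mbf{v}$ yields $G\slbmat{\mbf{u}(a)\\\mbf{u}_{x}(a)}=\int_{a}^{b}\mbs{H}(\theta)\mbf{u}_{xx}(\theta)d\theta$, which is exactly the defining constraint of $X_{\tlbmat{E_{1}\\0},\tlbmat{\mbs{F}_{1}\\\mbs{F}_{3}}}$ in Defn.~\ref{defn:Gmat}; hence $\mcl{T}_{1}\mbf{v}\in X_{\tlbmat{E_{1}\\0},\tlbmat{\mbs{F}_{1}\\\mbs{F}_{3}}}$.

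For statement (1), I would take $\mbf{u}\in X_{\tlbmat{E_{1}\\0},\tlbmat{\mbs{F}_{1}\\\mbs{F}_{3}}}$ and set $\mbf{v}:=\mbf{u}_{xx}$. Taylor's theorem with integral remainder (as in Lem.~\ref{lem:BC_expansion}) gives $\mbf{u}(x)=\mbf{u}(a)+(x-a)\mbf{u}_{x}(a)+\int_{a}^{x}(x-\theta)\mbf{u}_{xx}(\theta)d\theta$. Comparing with the collapsed form of $\mcl{T}_{1}\mbf{v}$ from the first step, the particular integrals coincide, so proving $\mbf{u}=\mcl{T}_{1}\mbf{v}$ reduces to the identity $\slbmat{\mbf{u}(a)\\\mbf{u}_{x}(a)}=G^{-1}\int_{a}^{b}\mbs{H}(\theta)\mbf{u}_{xx}(\theta)d\theta$. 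But $\mbf{u}\in X_{\tlbmat{E_{1}\\0},\tlbmat{\mbs{F}_{1}\\\mbs{F}_{3}}}$ means precisely $G\slbmat{\mbf{u}(a)\\\mbf{u}_{x}(a)}=\int_{a}^{b}\mbs{H}(\theta)\mbf{u}_{xx}(\theta)d\theta$ by Defn.~\ref{defn:Gmat}, and invertibility of $G$ delivers the required identity, whence $\mbf{u}=\mcl{T}_{1}\partial_{x}^2\mbf{u}$.

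I do not expect a genuine obstacle here: the argument is mechanical once the earlier lemmas are in place. The one point that requires care is the bookkeeping linking the constant vector extracted from the kernel to the boundary-value vector forced by the domain constraint. Concretely, the crux is that $\slbmat{\beta\\\alpha}=G^{-1}\int_{a}^{b}\mbs{H}\mbf{v}$ is simultaneously (i) the pair $(\mbf{u}(a),\mbf{u}_{x}(a))$ \emph{produced} by $\mcl{T}_{1}$ and (ii) the unique pair \emph{satisfying} the $X_{\tlbmat{E_{1}\\0},\tlbmat{\mbs{F}_{1}\\\mbs{F}_{3}}}$-constraint, and it is exactly the invertibility hypothesis on $G$ that forces these two readings to agree. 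I would also double-check the block read-off $\bmat{I_{n}&0_{n}}\slbmat{\beta\\\alpha}=\beta$ and $\bmat{0_{n}&I_{n}}\slbmat{\beta\\\alpha}=\alpha$ used when evaluating $\mbf{u}(a)$ and $\mbf{u}_{x}(a)$.
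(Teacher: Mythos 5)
Your proposal is correct and follows essentially the same route as the paper's own proof (Prop.~\ref{prop:Tmap_hat_appx}): statement (1) via Taylor's theorem plus the domain constraint and invertibility of $G$, and statement (2) by differentiating twice and reading off $(\mbf{u}(a),\mbf{u}_x(a))$ from the affine kernel term, which the paper does equivalently by noting $\partial_x^2\mbs{T}_1(x,\theta)=0$ and $\smallbmat{\mbs{T}_1(a,\theta)\\ \partial_x\mbs{T}_1(a,\theta)}=G^{-1}\mbs{H}(\theta)$. Your factoring of the kernel into $\beta+(x-a)\alpha$ is merely a cleaner bookkeeping of the same computation, and it correctly yields $\partial_x^2\mcl{T}_1\mbf{v}=\mbf{v}$ where the paper's appendix has a small typo (it drops the $+\mbf{v}(x)$ term from Leibniz' rule and writes ``whence $\mbf{u}_{xx}(x)=0$'').
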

\begin{proof}
	The result follows by defining $\mcl{T}_{1}$ as in~\eqref{eq:Tmap_Green}, imposing the boundary conditions defined by $\{\smallbmat{E_{1}\\0},\smallbmat{\mbs{F}_{1}\\\mbs{F}_{3}}\}$ to find $\alpha(x-a)+\beta=\int_{a}^{b}\mbs{T}_{1}(x,\theta)\mbf{v}(\theta)d\theta$.
A full proof is given in Prop.~\ref{prop:Tmap_hat_appx} in Appx.~\ref{appx:proofs}.
\end{proof}

Defining $\mcl{T}_{1}$ as in Prop.~\ref{prop:Tmap_hat}, for any $\mbf{v}\in L_{2}^{n}$, we have $\mcl{T}_{1}\mbf{v}\in X_{E_{1},\mbs{F}_{1}}$, and $\partial_{x}^2\mcl{T}_{1}\mbf{v}=\mbf{v}$. By Cor.~\ref{cor:Yset}, it follows that for all $\mbf{v}\in \hat{Y}$, we have $\mcl{T}_{1}\mbf{v}\in X$ and $\partial_{x}^2\mcl{T}_{1}\mbf{v}=\mbf{v}$, wherefore $\mcl{T}_{1}$ defines a right-inverse to $\partial_{x}^2:X\to \hat{Y}$. Unfortunately, $\mcl{T}_{1}$ does not quite define a left-inverse to $\partial_{x}^2$. Indeed, by construction, $\mcl{T}_{1}\mbf{v}\in X_{0,\mbs{F}_{3}}$ for all $\mbf{v}\in L_{2}^{n}$, so that $\mcl{T}_{1}\partial_{x}^2\mbf{u}$ corresponds to a projection of $\mbf{u}\in X$ onto the subspace $X\cap X_{0,\mbs{F}_{3}}$. This leaves the question of how to recover arbitrary elements of $X$ from their projections onto $X\cap X_{0,\mbs{F}_{3}}$, which we resolve in the next subsection.

\subsection{A Bijection Between the PDE Domain and a Subspace of $\R^{m}\times L_{2}^{n}$}\label{sec:Tmap:Tmap}

In the previous subsection, it was shown how we can define an integral operator $\mcl{T}_{1}:\hat{Y}\to X$ that acts as a right-inverse to the differential operator $\partial_{x}^{2}:X\to \hat{Y}$, so that $\partial_{x}^2\circ\mcl{T}_{1}=I$. Moreover, the complementary operator $\mcl{T}_{1}\circ\partial_{x}^{2}$ acts as a projection, mapping elements of $X$ onto a subspace defined by the auxiliary constraint $\mcl{F}\mbf{u}:=\int_{a}^{b}\mbs{F}_{3}(x)\mbf{u}(x)dx=0$. In this subsection, we show how we can recover any $\mbf{u}\in X$ from its projection $\mcl{T}_{1}\partial_{x}^2\mbf{u}$ and the value $\mcl{F}\mbf{u}\in\R^{m}$, defining $\mcl{T}_{0}:\R^{m}\to L_{2}^{n}$ such that $\mbf{u}=\mcl{T}_{0}\mcl{F}\mbf{u}+\mcl{T}_{1}\partial_{x}^{2}\mbf{u}$. In order for this to be satisfied, we note that $\mcl{T}_{0}$ must be such that not only $\mcl{F}(\mcl{T}_{0}\mcl{F})\mbf{u}=\mcl{F}\mbf{u}$ but also $\partial_{x}^2(\mcl{T}_{0}\mcl{F})\mbf{u}=0$. The following lemma shows how this may be achieved.

\begin{lem}\label{lem:uhat}
	Let $E_{1}\in\R^{2n-m\times 4n}$, $\mbs{F}\in L_{2}^{2n-m\times n}$, and $\mbs{F}_{3}\in L_{2}^{m\times n}$ be such that $G_{\tlbmat{E_{1}\\0},\tlbmat{\mbs{F}_{1}\\\mbs{F}_{3}}}$ is invertible. Define
	\begin{equation*}
		\mbs{T}_{0}(x):=\bmat{I_{n}&(x-a)I_{n}}G_{\tlbmat{E_{1}\\0},\tlbmat{\mbs{F}_{1}\\\mbs{F}_{3}}}^{-1}\bmat{0_{2n-m\times m}\\I_{m}},
	\end{equation*}
	and define $\mcl{T}_{0}:\R^{m}\to L_{2}^{n}[a,b]$ and $\mcl{F}:L_{2}^{n}[a,b]\to\R^{m}$ by
	\begin{align*}
		(\mcl{T}_{0}v)(x)&:=\mbs{T}_{0}(x)v,\qquad
		\mcl{F}\mbf{u}:=\int_{a}^{b}\mbs{F}_{3}(x)\mbf{u}(x)dx.
	\end{align*}
	for $v\in\R^{m}$ and $\mbf{u}\in L_{2}^{n}[a,b]$. Then, for all $v\in \R^{m}$, we have $\mcl{T}_{0}v\in X_{E_{1},\mbs{F}_{1}}$, $\partial_{x}^2\mcl{T}_{0}v=0$, and $\mcl{F}\mcl{T}_{0}v=v$.
\end{lem}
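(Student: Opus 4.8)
The plan is to exploit the fact that $\mcl{T}_0 v$ is, for each fixed $v\in\R^m$, an affine function of $x$, together with the block structure of the matrix $G_{\tlbmat{E_1\\0},\tlbmat{\mbs{F}_1\\\mbs{F}_3}}$. First I would introduce the shorthand $G:=G_{\tlbmat{E_1\\0},\tlbmat{\mbs{F}_1\\\mbs{F}_3}}$ and write $G^{-1}\bmat{0_{2n-m\times m}\\I_m}=\smallbmat{C\\D}$ with $C,D\in\R^{n\times m}$, so that by definition of $\mbs{T}_0$ we have $(\mcl{T}_0 v)(x)=\mbs{T}_0(x)v=Cv+(x-a)Dv$. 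This affine form immediately gives $\partial_x^2\mcl{T}_0 v=0$, proving the second claim, and upon differentiating and evaluating at $x=a$ it yields the boundary values $(\mcl{T}_0 v)(a)=Cv$ and $\partial_x(\mcl{T}_0 v)(a)=Dv$.

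The second step is to record the block decomposition of $G$. Since the map $\{E,\mbs{F}\}\mapsto G_{E,\mbs{F}}$ in Defn.~\ref{defn:Gmat} is linear and acts row-wise, stacking boundary data corresponds to stacking the associated $G$-matrices, so $G=\smallbmat{G_{E_1,\mbs{F}_1}\\G_{0,\mbs{F}_3}}$. Combining this with the defining relation $G\smallbmat{C\\D}=\smallbmat{0_{2n-m\times m}\\I_m}$ and reading off the two block rows yields the two identities $G_{E_1,\mbs{F}_1}\smallbmat{C\\D}=0$ and $G_{0,\mbs{F}_3}\smallbmat{C\\D}=I_m$.

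With these identities in hand the remaining two claims follow directly. For $\mcl{T}_0 v\in X_{E_1,\mbs{F}_1}$, I would invoke the representation of $X_{E_1,\mbs{F}_1}$ from Defn.~\ref{defn:Gmat} (via Lem.~\ref{lem:BC_expansion}): membership is equivalent to $G_{E_1,\mbs{F}_1}\smallbmat{(\mcl{T}_0 v)(a)\\\partial_x(\mcl{T}_0 v)(a)}=\int_a^b\mbs{H}_{E_1,\mbs{F}_1}(x)\,\partial_x^2(\mcl{T}_0 v)(x)\,dx$; the right-hand side vanishes since $\partial_x^2\mcl{T}_0 v=0$, and the left-hand side equals $G_{E_1,\mbs{F}_1}\smallbmat{Cv\\Dv}=0$ by the first identity. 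For $\mcl{F}\mcl{T}_0 v=v$, I would compute $\mcl{F}\mcl{T}_0 v=\int_a^b\mbs{F}_3(x)(Cv+(x-a)Dv)\,dx=\bl(\int_a^b\mbs{F}_3(x)dx\,C+\int_a^b(x-a)\mbs{F}_3(x)dx\,D\br)v$, and then observe that the bracketed matrix is exactly $G_{0,\mbs{F}_3}\smallbmat{C\\D}$, since by Defn.~\ref{defn:Gmat} with $E=0$ we have $G_{0,\mbs{F}_3}=\bmat{\int_a^b\mbs{F}_3(x)dx&\int_a^b(x-a)\mbs{F}_3(x)dx}$; the second identity then gives $\mcl{F}\mcl{T}_0 v=I_m v=v$.

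The computation is essentially bookkeeping once the block structure of $G$ is recognized, so there is no serious obstacle; the one point requiring care is the identification of the integral functional $\mcl{F}$ evaluated on the affine state $\mcl{T}_0 v$ with the matrix $G_{0,\mbs{F}_3}$ acting on $\smallbmat{C\\D}$, which is precisely what ties the claim $\mcl{F}\mcl{T}_0 v=v$ back to the defining inverse relation $G\smallbmat{C\\D}=\smallbmat{0\\I_m}$.
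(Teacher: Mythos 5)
Your proposal is correct and follows essentially the same route as the paper's own proof: both exploit the affine form of $\mbs{T}_{0}(x)v$ to get $\partial_{x}^2\mcl{T}_{0}v=0$ and the boundary values at $x=a$, use the block structure $G_{\tlbmat{E_{1}\\0},\tlbmat{\mbs{F}_{1}\\\mbs{F}_{3}}}=\smallbmat{G_{E_{1},\mbs{F}_{1}}\\G_{0,\mbs{F}_{3}}}$ together with the defining inverse relation to obtain the two block identities, and conclude membership in $X_{E_{1},\mbs{F}_{1}}$ (via Defn.~\ref{defn:Gmat} with vanishing right-hand side) and $\mcl{F}\mcl{T}_{0}v=v$ (via $G_{0,\mbs{F}_{3}}$). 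Introducing the explicit blocks $C,D$ of $G^{-1}\smallbmat{0\\I_{m}}$ is only a notational repackaging of the paper's computation.
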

\begin{proof}
	Fix arbitrary $v\in\R^{m}$, and let $\hat{\mbf{u}}:=\mcl{T}_{0}v$. Then, by definition of the function $\mbs{T}_{0}$, we have $\partial_{x}^2 \mbs{T}_{0}(x)=0$ and thus $\hat{\mbf{u}}_{xx}(x)=\partial_{x}^2 \mbs{T}_{0}(x)v=0$. Furthermore, we note that
	\begin{align*}
		&\slbmat{\hat{\mbf{u}}(a)\\\hat{\mbf{u}}_{x}(a)}
		=\slbmat{\mbs{T}_{0}(a)\\(\partial_{x}\mbs{T}_{0})(a)}v	
		=G_{\tlbmat{E_{1}\\0},\tlbmat{\mbs{F}_{1}\\\mbs{F}_{3}}}^{-1}\bmat{0_{2n-m\times m}\\I_{m}}v,
	\end{align*}
	and therefore, multiplying both sides by $G_{E_{1},\mbs{F}_{1}}=\smallbmat{I_{2n-m}\\0_{m\times 2n-m}}^T G_{\tlbmat{E_{1}\\0},\tlbmat{\mbs{F}_{1}\\\mbs{F}_{3}}}$, it follows that
	\begin{equation*}
		G_{E_{1},\mbs{F}_{1}}\slbmat{\hat{\mbf{u}}(a)\\\hat{\mbf{u}}_{x}(a)}
		=\slbmat{I_{2n-m}\\0_{m\times 2n-m}}^T \bmat{0_{2n-m\times m}\\I_{m}}v
		=0.
	\end{equation*}
	We find that $G_{E_{1},\mbs{F}_{1}}\smallbmat{\hat{\mbf{u}}(a)\\\hat{\mbf{u}}_{x}(a)}=0=\int_{a}^{b}\mbs{H}_{E_{1},\mbs{F}_{1}}(\theta)\hat{\mbf{u}}_{xx}(\theta)d\theta$, and therefore $\hat{\mbf{u}}\in X_{E_{1},\mbs{F}_{1}}$.
	Finally, by definition of $\mcl{F}$,
	\begin{align*}
		&\mcl{F}\hat{\mbf{u}}
		=\int_{a}^{b}\mbs{F}_{3}(x)\mbs{T}_{0}(x)v\,dx	\\
		&=\int_{a}^{b}\bbl(\mbs{F}_{3}(x)\bmat{I_{n}&(x-a)I_{n}}\bbr)dx\; G_{\tlbmat{E_{1}\\0},\tlbmat{\mbs{F}_{1}\\\mbs{F}_{3}}}^{-1}\bmat{0_{2n-m\times m}\\v}	\\
		&=G_{0,\mbs{F}_{3}}G_{\tlbmat{E_{1}\\0},\tlbmat{\mbs{F}_{1}\\\mbs{F}_{3}}}^{-1}\bmat{0_{2n-m\times m}\\v}
		=v.
	\end{align*}
\end{proof}

Defining $\mcl{T}_{0}$ as in Lem.~\ref{lem:uhat}, we can identify any $v_{0}\in\R^{m}$ with $\mbf{u}_{0}:=\mcl{T}_{0}v_{0}\in X$ such that $\mcl{F}\mbf{u}_{0}=v_{0}$ and $\partial_{x}^2\mbf{u}_{0}=0$. On the other hand, defining $\mcl{T}_{1}$ as in Prop.~\ref{prop:Tmap_hat}, we can identify any $\mbf{v}_{1}\in \hat{Y}$ with $\mbf{u}_{1}:=\mcl{T}_{1}\mbf{v}_{1}\in X$ such that $\mcl{F}\mbf{u}_{1}=0$ and $\partial_{x}^2\mbf{u}_{1}=\mbf{v}_{1}$.
Consequently, introducing the augmented differential operator $\mscr{D}:=\smallbmat{\mcl{F}\\\partial_{x}^2}$, we can define a unique map $\mcl{T}:\mscr{D}\mbf{u}\mapsto\mbf{u}$ by an integral operator $\mcl{T}:=\bmat{\mcl{T}_{0}&\mcl{T}_{1}}$. The following theorem proves that these operators $\mscr{D}$ and $\mcl{T}$ in fact define a bijection between $X$ and $Y:=\R^{m}\times\hat{Y}$.

\begin{thm}\label{thm:Tmap}
	For given $E\in\R^{2n\times 4n}$ and $\mbs{F}\in L_{2}^{2n\times n}$, let $\sclbmat{E_{1}\\E_{2}}$ and $\sclbmat{\mbs{F}_{1}\\\mbs{F}_{2}}$ be as in Lem.~\ref{lem:BC_split}. Define $X$ and $Y$ as in~\eqref{eq:Xset} and~\eqref{eq:Yset_K}, respectively, where we let
	\begin{equation*}
		(\mcl{K}\smallbmat{v_{0}\\\mbf{v}_{1}})=\int_{a}^{b}\mbs{H}_{E_{2},\mbs{F}_{2}}(x)\mbf{v}_{1}(x)dx.
	\end{equation*}
	Further let $\mbs{F}_{3}\in L_{2}^{m\times n}$ be such that $G_{\tlbmat{E_{1}\\0},\tlbmat{\mbs{F}_{1}\\\mbs{F}_{3}}}$ is invertible, and define associated $\mcl{T}_{1}$ and $\{\mcl{F},\mcl{T}_{0}\}$ as in Prop.~\ref{prop:Tmap_hat} and Lem.~\ref{lem:uhat}, respectively.
	Finally, define $\mscr{D}:=\sclbmat{\mcl{F} \\ \partial_{x}^2}$, and let
	\begin{equation*}
		\mcl{T}\mbf{v}:=\mcl{T}_{0}v_{0} +\mcl{T}_{1}\mbf{v}_{1},\qquad \forall\mbf{v}=\sclbmat{v_{0}\\\mbf{v}_{1}}\in \R^{m}\times L_{2}^{n}[a,b],
	\end{equation*}
	Then the following statements hold:
	\begin{enumerate}
		\item 
		For every $\mbf{u}\in X$, $\mscr{D}\mbf{u} \in Y$ and $\mbf{u}=\mcl{T}\mscr{D}\mbf{u}$.
		\item 
		For every $\mbf{v} \in Y$, $\mcl{T}\mbf{v} \in X$ and $\mbf{v}=\mscr{D}\mcl{T}\mbf{v}$.
	\end{enumerate}
\end{thm}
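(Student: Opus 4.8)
The plan is to assemble the theorem directly from the three component results already established, treating the two bijection statements separately and reducing each to an application of Prop.~\ref{prop:Tmap_hat} and Lem.~\ref{lem:uhat} combined with the range characterization of Cor.~\ref{cor:Yset}. Throughout I would exploit that $X_{E_{1},\mbs{F}_{1}}$ and $X_{0,\mbs{F}_{3}}$ are \emph{linear} subspaces of $W_{2}^{2,n}$, so that sums and differences of their elements remain in them, and that $\mscr{D}\mbf{u}=\sclbmat{\mcl{F}\mbf{u}\\\mbf{u}_{xx}}$ and $\mcl{T}\mbf{v}=\mcl{T}_{0}v_{0}+\mcl{T}_{1}\mbf{v}_{1}$ decompose cleanly along the two state components.

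For statement (1), I would fix $\mbf{u}\in X$ and first verify $\mscr{D}\mbf{u}\in Y$: the component $\mcl{F}\mbf{u}$ lies in $\R^{m}$ automatically, while $\mbf{u}\in X\subseteq X_{E_{2},\mbs{F}_{2}}$ gives $\mbf{u}_{xx}\in\hat{Y}$ by Cor.~\ref{cor:Yset}, hence $\mscr{D}\mbf{u}\in\R^{m}\times\hat{Y}=Y$. The identity $\mbf{u}=\mcl{T}\mscr{D}\mbf{u}$ is the crux. Here the obstacle is that Prop.~\ref{prop:Tmap_hat} only inverts $\partial_{x}^2$ on the restricted space $X_{\tlbmat{E_{1}\\0},\tlbmat{\mbs{F}_{1}\\\mbs{F}_{3}}}$, whereas a generic $\mbf{u}\in X$ need not satisfy $\mcl{F}\mbf{u}=0$. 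To circumvent this I would introduce the corrected element $\mbf{w}:=\mbf{u}-\mcl{T}_{0}\mcl{F}\mbf{u}$ and show it lands in that restricted space: by Lem.~\ref{lem:uhat}, $\mcl{F}\mbf{w}=\mcl{F}\mbf{u}-\mcl{F}\mcl{T}_{0}\mcl{F}\mbf{u}=0$ and $\mbf{w}_{xx}=\mbf{u}_{xx}-\partial_{x}^2\mcl{T}_{0}\mcl{F}\mbf{u}=\mbf{u}_{xx}$, while $\mbf{u}\in X_{E_{1},\mbs{F}_{1}}$ and $\mcl{T}_{0}\mcl{F}\mbf{u}\in X_{E_{1},\mbs{F}_{1}}$ place $\mbf{w}\in X_{E_{1},\mbs{F}_{1}}$ by linearity; since $\mcl{F}\mbf{w}=0$ is exactly the defining constraint of $X_{0,\mbs{F}_{3}}$, I get $\mbf{w}\in X_{\tlbmat{E_{1}\\0},\tlbmat{\mbs{F}_{1}\\\mbs{F}_{3}}}$. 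Prop.~\ref{prop:Tmap_hat}(1) then gives $\mbf{w}=\mcl{T}_{1}\mbf{w}_{xx}=\mcl{T}_{1}\mbf{u}_{xx}$, and adding back $\mcl{T}_{0}\mcl{F}\mbf{u}$ yields $\mbf{u}=\mcl{T}_{0}\mcl{F}\mbf{u}+\mcl{T}_{1}\mbf{u}_{xx}=\mcl{T}\mscr{D}\mbf{u}$.

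For statement (2), I would fix $\mbf{v}=\sclbmat{v_{0}\\\mbf{v}_{1}}\in Y$, so $\mbf{v}_{1}\in\hat{Y}$, and compute the two components of $\mcl{T}\mbf{v}=\mcl{T}_{0}v_{0}+\mcl{T}_{1}\mbf{v}_{1}$. Using $\partial_{x}^2\mcl{T}_{0}v_{0}=0$ (Lem.~\ref{lem:uhat}) and $\partial_{x}^2\mcl{T}_{1}\mbf{v}_{1}=\mbf{v}_{1}$ (Prop.~\ref{prop:Tmap_hat}(2)) I get $\partial_{x}^2\mcl{T}\mbf{v}=\mbf{v}_{1}\in\hat{Y}$, so Cor.~\ref{cor:Yset} places $\mcl{T}\mbf{v}\in X_{E_{2},\mbs{F}_{2}}$; meanwhile both $\mcl{T}_{0}v_{0}$ and $\mcl{T}_{1}\mbf{v}_{1}$ lie in $X_{E_{1},\mbs{F}_{1}}$, so $\mcl{T}\mbf{v}\in X_{E_{1},\mbs{F}_{1}}$ and hence $\mcl{T}\mbf{v}\in X_{E_{1},\mbs{F}_{1}}\cap X_{E_{2},\mbs{F}_{2}}=X$. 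For the identity $\mbf{v}=\mscr{D}\mcl{T}\mbf{v}$, the second component is $\partial_{x}^2\mcl{T}\mbf{v}=\mbf{v}_{1}$, and the first is $\mcl{F}\mcl{T}\mbf{v}=\mcl{F}\mcl{T}_{0}v_{0}+\mcl{F}\mcl{T}_{1}\mbf{v}_{1}=v_{0}+\mcl{F}\mcl{T}_{1}\mbf{v}_{1}$; the cross term vanishes because $\mcl{T}_{1}\mbf{v}_{1}\in X_{0,\mbs{F}_{3}}$ forces $\mcl{F}\mcl{T}_{1}\mbf{v}_{1}=0$, so $\mscr{D}\mcl{T}\mbf{v}=\sclbmat{v_{0}\\\mbf{v}_{1}}=\mbf{v}$.

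I expect the only genuinely delicate point to be the bookkeeping of which subspace each piece lives in — in particular justifying the subtraction $\mbf{u}-\mcl{T}_{0}\mcl{F}\mbf{u}$ via linearity of $X_{E_{1},\mbs{F}_{1}}$ and recognizing that the vanishing of $\mcl{F}$ is precisely the constraint cutting out $X_{0,\mbs{F}_{3}}$, so that Prop.~\ref{prop:Tmap_hat} becomes applicable. Everything else is a direct substitution of the component identities; no new estimates or constructions are required beyond those already proven.
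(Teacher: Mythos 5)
Your proposal is correct and follows essentially the same route as the paper's own proof: the same corrected element $\mbf{u}-\mcl{T}_{0}\mcl{F}\mbf{u}$ (the paper's $\hat{\mbf{u}}$, your $\mbf{w}$) placed in $X_{E_{1},\mbs{F}_{1}}\cap X_{0,\mbs{F}_{3}}$ via Lem.~\ref{lem:uhat} and linearity, followed by Prop.~\ref{prop:Tmap_hat} for statement (1), and the same componentwise verification using Cor.~\ref{cor:Yset} and the vanishing of $\mcl{F}\mcl{T}_{1}\mbf{v}_{1}$ for statement (2). No gaps; if anything, your explicit identification of $\mcl{F}\mbf{w}=0$ with the defining constraint of $X_{0,\mbs{F}_{3}}$ makes the applicability of Prop.~\ref{prop:Tmap_hat} slightly more transparent than the paper's phrasing.
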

\begin{proof}
	To prove the first statement, fix arbitrary $\mbf{u}\in X$, and let $\mbf{v}=\mscr{D}\mbf{u}$. By Lem.~\ref{lem:BC_expansion} and Lem.~\ref{lem:BC_split}, $\mbf{u}\in X_{E_{1},\mbs{F}_{1}}\cap X_{E_{2},\mbs{F}_{2}}$. It follows by Cor.~\ref{cor:Yset} that $\mbf{u}_{xx}\in \hat{Y}$, and therefore $\mbf{v}\in Y$. Let now $\hat{\mbf{u}}:=\mbf{u}-\mcl{T}_{0}\mcl{F}\mbf{u}$, so that $\mbf{u}=\hat{\mbf{u}}+\mcl{T}_{0}\mcl{F}\mbf{u}$. By Lem.~\ref{lem:uhat}, $\mcl{T}_{0}\mcl{F}\mbf{u}\in X_{E_{1},\mbs{F}_{1}}$, so by linearity of the boundary conditions, $\hat{\mbf{u}}\in X_{E_{1},\mbs{F}_{1}}$. In addition, by that same lemma, $\partial_{x}^2\mcl{T}_{0}\mcl{F}\mbf{u}=0$ and $\mcl{F}\mcl{T}_{0}\mcl{F}\mbf{u}=\mcl{F}\mbf{u}$, wherefore $\hat{\mbf{u}}_{xx}=\mbf{u}_{xx}$ and $\mcl{F}\hat{\mbf{u}}=0$. It follows that $\hat{\mbf{u}}\in X\cap X_{0,\mbs{F}_{3}}$, and thus, by Prop.~\ref{prop:Tmap_hat}, $\hat{\mbf{u}}=\mcl{T}_{1}\hat{\mbf{u}}_{xx}=\mcl{T}_{1}\mbf{u}_{xx}$. We find $\mbf{u}=\mcl{T}_{1}\mbf{u}_{xx}+\mcl{T}_{0}\mcl{F}\mbf{u}=\mcl{T}\mbf{v}$.
	
	
	To prove the second statement, fix arbitrary $(v_{0},\mbf{v}_{1})\in Y$, so that $\mbf{v}_{1}\in \hat{Y}$. By Prop.~\ref{prop:Tmap_hat}, $\partial_{x}^2\mcl{T}_{1}\mbf{v}_{1}=\mbf{v}_{1}$, and $\mcl{T}_{1}\mbf{v}\in X_{E_{1},\mbs{F}_{1}}\cap X_{0,\mbs{F}_{3}}$, whence $\mcl{F}\mcl{T}_{1}\mbf{v}_{1}=0$. By Lem.~\ref{lem:uhat}, $\partial_{x}^2\mcl{T}_{0}v_{0}=0$, and $\mcl{T}_{0}v_{0}\in X_{E_{1},\mbs{F}_{1}}$, with $\mcl{F}\mcl{T}_{0}v_{0}=v_{0}$. Letting $\mbf{u}=\mcl{T}_{1}\mbf{v}_{1}+\mcl{T}_{0}v_{0}$, it follows by linearity that $\mcl{F}\mbf{u}=v_{0}$, and $\mbf{u}_{xx}=\mbf{v}_{1}$. Furthermore, by linearity of the boundary conditions, we also have $\mbf{u}\in X_{E_{1},\mbs{F}_{1}}$. Finally, since $\mbf{u}_{xx}=\mbf{v}_{1}\in \hat{Y}$, by Cor.~\ref{cor:Yset}, $\mbf{u}\in X_{E_{2},\mbs{F}_{2}}$, and so by Lem.~\ref{lem:BC_split}, $\mbf{u}\in X$. 
\end{proof}

By Thm.~\ref{thm:Tmap}, the space $X\subseteq W_{2}^{2,n}$ is isomorphic to $Y\subseteq L_{2}^{n}$, with the augmented differential operator $\mscr{D}:X\to Y$ admitting an inverse as the integral operator $\mcl{T}:Y\to X$.
Here, although elements $\mbf{v}\in Y$ do have to satisfy a functional constraint $\mcl{K}\mbf{v}=0$, they are free of the regularity and boundary constraints imposed upon $\mbf{u}\in X$.
In the following sections, we will exploit this fact to show how we can analyse stability of linear PDEs with domain $X$, by parameterizing a Lyapunov functional on $Y$.
Of course, since PDEs may also involve lower-order derivatives of the state, this requires defining a map $\mcl{R}:\mscr{D}\mbf{u}\mapsto\mbf{u}_{x}$ as well, for which we have the following corollary. 

\begin{cor}\label{cor:Rmap}
	Let $\smallbmat{E_{1}\\0}\in\R^{2n\times 4n}$ and $\smallbmat{\mbs{F}_{1}\\\mbs{F}_{3}}\in L_{2}^{2n\times n}$ satisfy the conditions of Thm.~\ref{thm:Tmap}, and define $\mscr{D}$ and $\mcl{T}$ as in that theorem. Let $\mbs{T}_{1}\in L_{2}^{n\times n}$ and $\mbs{T}_{0}\in L_{2}^{n\times m}$ be as in Prop.~\ref{prop:Tmap_hat} and Lem.~\ref{lem:uhat}, respectively, and define
	\begin{equation*}\Resize{\linewidth}{
		(\mcl{R}\mbf{v})(x):=\partial_{x}\mbs{T}_{0}(x)v_{0} \!+\!\int_{a}^{b}\!\!\partial_{x}\mbs{T}_{1}(x,\theta)\mbf{v}_{1}(\theta)d\theta \!+\!\int_{a}^{x}\!\!\mbf{v}_{1}(\theta)d\theta,}
	\end{equation*}
	for $\mbf{v}=(v_{0},\mbf{v}_{1})\in\R^{m}\times L_{2}^{n}[a,b]$.
	Then the following holds:
	\begin{enumerate}
		\item 
		For every $\mbf{u}\in X_{E_{1},\mbs{F}_{1}}$, we have $\partial_{x}\mbf{u}=\mcl{R}\mscr{D}\mbf{u}$.
		
		\item 
		For every $\mbf{v}\in \R^{m}\times L_{2}^{n}$, we have $\mcl{R}\mbf{v}=\partial_{x}(\mcl{T}\mbf{v})$.		
	\end{enumerate}
\end{cor}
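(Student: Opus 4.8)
The plan is to prove the second statement first by direct differentiation, and then to obtain the first statement from it together with the representation $\mbf{u}=\mcl{T}\mscr{D}\mbf{u}$ already established in Thm.~\ref{thm:Tmap}.

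For the second statement, I would start from the explicit formula $(\mcl{T}\mbf{v})(x)=\mbs{T}_{0}(x)v_{0}+\int_{a}^{b}\mbs{T}_{1}(x,\theta)\mbf{v}_{1}(\theta)d\theta+\int_{a}^{x}(x-\theta)\mbf{v}_{1}(\theta)d\theta$ and differentiate in $x$ term by term. The first two terms carry $x$-dependence only through the kernels $\mbs{T}_{0}(x)$ and $\mbs{T}_{1}(x,\theta)$, so differentiating under the fixed-limit integral yields $\partial_{x}\mbs{T}_{0}(x)v_{0}+\int_{a}^{b}\partial_{x}\mbs{T}_{1}(x,\theta)\mbf{v}_{1}(\theta)d\theta$. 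The only nontrivial step is the variable-limit integral, to which I apply the Leibniz rule: the boundary contribution vanishes because the integrand $(x-\theta)\mbf{v}_{1}(\theta)$ is zero at $\theta=x$, leaving $\int_{a}^{x}\mbf{v}_{1}(\theta)d\theta$. Summing the three contributions reproduces exactly the definition of $(\mcl{R}\mbf{v})(x)$, which proves $\mcl{R}\mbf{v}=\partial_{x}(\mcl{T}\mbf{v})$ for every $\mbf{v}\in\R^{m}\times L_{2}^{n}$.

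For the first statement, the key observation is that the representation $\mbf{u}=\mcl{T}\mscr{D}\mbf{u}$ in fact holds on the larger space $X_{E_{1},\mbs{F}_{1}}$, not merely on $X$. Indeed, repeating the argument from the proof of Thm.~\ref{thm:Tmap}(1), I would set $\hat{\mbf{u}}:=\mbf{u}-\mcl{T}_{0}\mcl{F}\mbf{u}$; by Lem.~\ref{lem:uhat} and linearity of the boundary conditions $\hat{\mbf{u}}\in X_{E_{1},\mbs{F}_{1}}$, while $\partial_{x}^{2}\hat{\mbf{u}}=\mbf{u}_{xx}$ and $\mcl{F}\hat{\mbf{u}}=0$ give $\hat{\mbf{u}}\in X_{E_{1},\mbs{F}_{1}}\cap X_{0,\mbs{F}_{3}}=X_{\tlbmat{E_{1}\\0},\tlbmat{\mbs{F}_{1}\\\mbs{F}_{3}}}$. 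Applying Prop.~\ref{prop:Tmap_hat}(1) then yields $\hat{\mbf{u}}=\mcl{T}_{1}\mbf{u}_{xx}$, so that $\mbf{u}=\mcl{T}_{1}\mbf{u}_{xx}+\mcl{T}_{0}\mcl{F}\mbf{u}=\mcl{T}\mscr{D}\mbf{u}$. Differentiating and invoking the second statement gives $\partial_{x}\mbf{u}=\partial_{x}(\mcl{T}\mscr{D}\mbf{u})=\mcl{R}\mscr{D}\mbf{u}$.

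I expect the main subtlety to be precisely this last point: Thm.~\ref{thm:Tmap}(1) is stated for $\mbf{u}\in X$, and one must verify that the range constraint encoded by $\{E_{2},\mbs{F}_{2}\}$ is never used in deriving $\mbf{u}=\mcl{T}\mscr{D}\mbf{u}$ — it enters the proof of Thm.~\ref{thm:Tmap} only to conclude $\mscr{D}\mbf{u}\in Y$, which is irrelevant here since the second statement holds for every $\mbf{v}\in\R^{m}\times L_{2}^{n}$. A minor technical point worth checking is the legitimacy of interchanging $\partial_{x}$ with the integral and of the Leibniz rule for $L_{2}^{n}$ data; this is justified because $\mcl{T}$ maps into $W_{2}^{2,n}$, so $\partial_{x}(\mcl{T}\mbf{v})$ is well defined and the manipulations hold almost everywhere.
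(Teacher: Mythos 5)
Your proof is correct and takes essentially the same route as the paper: statement (2) by termwise differentiation with the Leibniz rule (where the boundary contribution vanishes since the kernel $(x-\theta)$ is zero at $\theta=x$), and statement (1) by combining this with the identity $\mbf{u}=\mcl{T}\mscr{D}\mbf{u}$ on $X_{E_{1},\mbs{F}_{1}}$. Your explicit check that this identity holds on all of $X_{E_{1},\mbs{F}_{1}}$ rather than only on $X$ --- which the paper glosses over by citing ``the proof of Thm.~\ref{thm:Tmap}'' --- is precisely the right justification of that step, since the constraint $\{E_{2},\mbs{F}_{2}\}$ enters that proof only to establish $\mscr{D}\mbf{u}\in Y$.
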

\begin{proof}
	The result follows from Thm.~\ref{thm:Tmap}, by applying Leibniz' integral rule to the operator $\mcl{T}$. A full proof is given in Cor.~\ref{cor:Rmap_appx}, in Appx.~\ref{appx:proofs}.
\end{proof}

By Thm.~\ref{thm:Tmap} and Cor.~\ref{cor:Rmap}, for any $\mbf{u}\in X$, both $\mbf{u}$ and $\mbf{u}_{x}$ may be uniquely expressed in terms of a corresponding $\mscr{D}\mbf{u}\in Y$, using integral operators $\mcl{T}$ and $\mcl{R}$. The following example illustrates what the space $Y$ and the resulting operator $\mcl{T}$ look like for the simple case of periodic boundary conditions, letting $\mcl{F}\mbf{u}:=\int_{a}^{b}\mbf{u}(x)dx$.

\begin{example}\label{ex:Neumann_Tmap}
	To illustrate an application of Thm.~\ref{thm:Tmap}, consider the following Sobolev subspace constrained by periodic boundary conditions,
	\begin{equation*}\Resize{\linewidth}{
		X=\!\bl\{\mbf{u}\in W_{2}^{2,n}[-1,1]\,\bl|\, \mbf{u}(-1)=\mbf{u}(1),~\mbf{u}_{x}(-1)=\mbf{u}_{x}(1)\br\}.}
	\end{equation*}
	By Cor.~\ref{cor:Yset}, for any $\mbf{u}\in X$ we have $\mbf{u}_{xx}\in \hat{Y}$, where
	\begin{equation*}
		\hat{Y}:=\bbl\{\mbf{v}\in L_{2}^{n}[-1,1]\:\,\bbl|\:\, \int_{-1}^{1}\mbf{v}(x)dx=0\bbr\}.
	\end{equation*}
	To obtain a unique expression for $\mbf{u}$ in terms of $\mbf{u}_{xx}$, we introduce $\mcl{F}\mbf{u}:=\frac{1}{2}\int_{-1}^{1}\mbf{u}(x)dx$, setting $\mbs{F}_{3}=\frac{1}{2}$ in Thm.~\ref{thm:Tmap}. Then, defining
	\begin{equation*}\Resize{\linewidth}{
		(\mcl{T}_{1}\mbf{v})(x)\!:=\!\int_{-1}^{x}\!\!(x-\theta)\mbf{v}(\theta)d\theta -\!\!\int_{-1}^{1}\!\!\frac{(\theta\!-\!1)(1\!-\!\theta\!+\!2x)}{4}\mbf{v}(\theta)d\theta,}
	\end{equation*}
	it follows by Thm.~\ref{thm:Tmap} that $\mbf{u}\in X$ if and only if $\mbf{u}=\frac{1}{2}\int_{-1}^{1}\!\mbf{u}(x)dx+\mcl{T}_{1}\mbf{u}_{xx}$. Conversely, $(v_{0},\mbf{v}_{1})\in Y:=\R\times \hat{Y}$ if and only if $v_{0}=\frac{1}{2}\int_{-1}^{1}\!\mbf{u}(x)dx$ and $\mbf{v}_{1}=\mbf{u}_{xx}$, where $\mbf{u}=v_{0}+\mcl{T}_{1}\mbf{v}_{1}$. 
\end{example}

\section{A PIE Representation of Linear PDEs}\label{sec:PDE2PIE}

In the previous section, it was proven that any $\mbf{u}\in X\subseteq W_{2}^{2,n}[a,b]$ can be uniquely identified by a corresponding element $\mbf{v}\in Y$, free of regularity and boundary constraints. In this section, we use this relation to show that for any $\mbf u(t)\in X$ satisfying a given form of linear PDE, we may uniquely associate a $\mbf{v}(t)\in Y$ that satisfies a corresponding Partial Integral Equation (PIE). To start, we first recall the definition and some properties of partial integral operators, used to parameterize such PIEs.

\subsection{PI Operators on $\R^{m}\times L_{2}^{n}[a,b]$}

PIEs are parameterized by Partial Integral (PI) operators. PI operators are an algebra of bounded, linear operators acting on $\R^{m}\times L_{2}^{n}$, parameterized by polynomials. Specifically, we define the set of 4-PI operators as follows.

\begin{defn}[4-PI operator, $\PIset^{(m,n)\times(p,q)}$]
	We say that $\mcl{P}:\R^{p}\times L_{2}^{q}\to \R^{m}\times L_{2}^{n}$ is a \textit{4-PI operator}, writing $\mcl{P}\in\PIset^{(m,n)\times(p,q)}$, if for some $P\in\R^{m\times p}$, $\mbs{Q}_{1}\in\R^{m\times q}[x]$, $\mbs{Q}_{2}\in\R^{n\times p}$, $\mbs{R}_{0}\in\R^{n\times q}[x]$ and $\mbs{R}_{1},\mbs{R}_{2}\in\R^{n\times q}[x,\theta]$, \\[-1.4em]
	\begin{align*}
		\bbl(\mcl{P}{\slbmat{v_{0}\\\mbf{v}_{1}}}\bbr)(x)
		:=\lbmat{Pv_{0}+\int_{a}^{b}\mbs{Q}_{1}(x)\mbf{v}_{1}(x)\\\mbs{Q}_{2}(x)v_{0}+\mbs{R}_{0}(x)\mbf{v}_{1}(x)+(\mcl{R}\mbf{v}_{1})(x)},
	\end{align*}
	for $(v_{0},\mbf{v}_{1})\in\R^{p}\times L_{2}^{q}[a,b]$, where \\[-1.6em]
	\begin{align*}
		(\mcl{R}\mbf{v}_{1})(x)&:={\int_{a}^{x}}\mbs{R}_{1}(x,\theta)\mbf{v}_{1}(\theta)d\theta +{\int_{a}^{b}}\mbs{R}_{2}(x,\theta)\mbf{v}_{1}(\theta)d\theta.
	\end{align*}
\end{defn}

\smallskip

By definition of the operators in Thm.~\ref{thm:Tmap} and Cor.~\ref{cor:Rmap}, if the space $X$ is defined by \textit{polynomial} $\mbs{F}$, then each of the operators $\{\mcl{T},\mcl{R},\mcl{F},\mcl{K}\}$ defined in the previous section is a 4-PI operator.
This will allow us to construct a PIE representation of linear PDEs in the following subsection, and test stability in this PIE representation in Section~\ref{sec:LPI}, by exploiting the algebraic properties of PI operators. Specifically, we note that 4-PI operators form a $*$-algebra, satisfying:
\begin{itemize}
	\item For any $\lambda,\mu\in\R$ and $\mcl{A},\mcl{B}\in\PIset^{(m,n)\times(p,q)}$, we have $\lambda\mcl{A}+\mu\mcl{B}\in\PIset^{(m,n)\times(p,q)}$;
	
	\item For any $\mcl{A}\in\PIset^{(m,n)\times(k,\ell)}$ and $\mcl{B}\in\PIset^{(k,\ell)\times(p,q)}$, we have $\mcl{A}\circ\mcl{B}\in\PIset^{(m,n)\times(p,q)}$;
	
	\item For any $\mcl{A}\in\PIset^{(m,n)\times(p,q)}$, $\mcl{A}^*\in\PIset^{(p,q)\times(m,n)}$.
\end{itemize}
Explicit parameter maps defining these operations can be found in~\cite{shivakumar2024GPDE}, and these operations can also be performed with the Matlab toolbox PIETOOLS~\cite{shivakumar2025PIETOOLS2024}, using overloaded functions for the sum (\texttt{+}), product (\texttt{*}), and transpose (\texttt{'}).

\subsection{Converting a PDE to a PIE}
Using the class of PI operators from the previous subsection, we now propose an equivalent representation of linear PDEs that is parameterized by such PI operators, as a PIE.
For the sake of clarity, we will consider only 2nd-order, linear PDEs here, although results in this section can be generalized to higher-order PDEs, coupled ODE-PDEs, and nonlinear PDEs as well.
Specifically, consider a 1D PDE of the form
\begin{align}\label{eq:PDE_standard}
	\mbf{u}_{t}(t,x)&=\mbs{A}_{0}(x)\mbf{u}(t,x) +\!\mbs{A}_{1}(x)\mbf{u}_{x}(t,x) +\!\mbs{A}_{2}(x)\mbf{u}_{xx}(t,x),	\notag\\
	\mbf{u}(t)&\in X,\hspace*{4.0cm} t\geq 0,	
\end{align}
parameterized by $\mbs{A}_{0},\mbs{A}_{1},\mbs{A}_{2}\in \R^{n\times n}[x]$,  $E\in\R^{2n\times 4n}$ and $\mbs{F}\in\R^{2n\times n}[x]$. We define solutions to the PDE as follows.
\begin{defn}
	For given $\mbs{A}_{0},\mbs{A}_{1},\mbs{A}_{2}\in \R^{n\times n}[x]$,  $E\in\R^{2n\times 4n}$ and $\mbs{F}\in\R^{2n\times n}[x]$, we say that $\mbf{u}$ is a solution to the PDE defined by $\{\mbs{A}_{i},E,\mbs{F}\}$ with initial state $\mbf{u}^{0}\in X$ if $\mbf{u}(t)$ is Fr\'echet differentiable, $\mbf{u}(0)=\mbf{u}^{0}$, and $\mbf{u}(t)$ satisfies~\eqref{eq:PDE_standard}, with $X$ as in~\eqref{eq:Xset}.
\end{defn}

Consider now a solution $\mbf{u}$ to the PDE~\eqref{eq:PDE_standard}. By Thm.~\ref{thm:Tmap} and Cor.~\ref{cor:Rmap}, the state $\mbf{u}(t)\in X$ at every time $t\geq 0$ must satisfy $\mbf{u}(t)=\mcl{T}\mbf{v}(t)$ and $\mbf{u}_{x}(t)=\mcl{R}\mbf{v}(t)$ for $\mbf{v}=\mscr{D}\mbf{u}$. Defining then the PI operator $\mcl{A}\in\PIset^{(0,n)\times(m,n)}$ as
\begin{equation}\label{eq:Aop}
	\mcl{A}:=\tnf{M}_{\mbs{A}_{0}}\circ\mcl{T} +\tnf{M}_{\mbs{A}_{1}}\circ\mcl{R} +\tnf{M}_{\mbs{A}_{2}}\circ\mcl{I},
\end{equation}
where we let
 $\mcl{I}\smallbmat{v_{0}\\\mbf{v}_{1}}:=\mbf{v}_{1}$, it follows that if $\mbf{u}(t)$ satisfies the PDE~\eqref{eq:PDE_standard}, then $\mbf{v}(t)=\mscr{D}\mbf{u}(t)=\smallbmat{\mcl{F}\mbf{u}\\\mbf{u}_{xx}}$ satisfies the PIE
\begin{equation}\label{eq:PIE_intermediate}
	\partial_{t}\mcl{T}\mbf{v}(t)
	=\mcl{A}\mbf{v}(t),\qquad \mbf{v}(t)\in Y,\quad t\geq0.
\end{equation}
In this representation, the \textit{fundamental state} $\mbf{v}(t)\in Y\subseteq\R^{m}\times L_{2}^{n}$ consists of $m+n$ components, but the dynamics are governed by only $n$ equations. However, recall from Thm.~\ref{thm:Tmap} that $\mcl{F}\mcl{T}\smallbmat{v_{0}\\\mbf{v}_{1}}=v_{0}$ for all $\smallbmat{v_{0}\\\mbf{v}_{1}}\in Y$. Applying $\mcl{F}$ to both sides of the PIE in~\eqref{eq:PIE_intermediate}, then, it follows that $v_{0}(t)$ must satisfy
\begin{equation*}
	\dot{v}_{0}(t)=\partial_{t}(\mcl{F}\circ\mcl{T})\slbmat{v_{0}(t)\\\mbf{v}_{1}(t)}=(\mcl{F}\circ\mcl{A})\slbmat{v_{0}(t)\\\mbf{v}_{1}(t)},\quad t\geq 0.
\end{equation*}
Finally, defining $\hat{\mcl{T}}:=\sclbmat{I_{m}\\\mcl{T}}$ and $\hat{\mcl{A}}:=\sclbmat{\mcl{F}\circ\mcl{A}\\\mcl{A}}$, we obtain a compact representation for the dynamics of $\mbf{v}=\smallbmat{v_{0}\\\mbf{v}_{1}}$ as
\begin{align}\label{eq:PIE_standard}
	\partial_{t}\hat{\mcl{T}}\mbf{v}(t)&=\hat{\mcl{A}}\mbf{v}(t),	&	\mbf{v}(t)\in Y,\quad t\geq 0.
\end{align}
We define solutions to this PIE as follows.
\begin{defn}
	For given $\mcl{T},\mcl{A}\in\PIset^{(m,n)\times(m,n)}$ and $\mcl{K}\in\PIset^{(m,0)\times(m,n)}$, we say that $\mbf{v}$ solves the PIE defined by $\{\mcl{T},\mcl{A},\mcl{K}\}$ with initial state $\mbf{v}^{0}\in Y$ for $Y$ as in~\eqref{eq:Yset_K} if $\mbf{v}(t)$ is Fr\'echet differentiable, $\mbf{v}(0)=\mbf{v}^{0}$, and $\mbf{v}(t)$ satisfies~\eqref{eq:PIE_standard}.
\end{defn}

The following result shows that the PDE and PIE representations are indeed equivalent, in that for any solution to the PDE we can define a solution to the PIE, and vice versa.

\begin{thm}\label{thm:PDE2PIE}
	For given $\mbs{A}_{0},\mbs{A}_{1},\mbs{A}_{2}\in\R^{n\times n}[x]$, $E\in\R^{2n\times 4n}$ and $\mbs{F}\in\R^{2n\times n}[x]$, define associated operators $\{\mcl{T},\mcl{F},\mcl{K}\}$ and $\mscr{D}$ as in Thm.~\ref{thm:Tmap}, and $\mcl{A}$ as in~\eqref{eq:Aop}, and let $\hat{\mcl{T}}:=\sclbmat{I_{m}\\\mcl{T}}$ and $\hat{\mcl{A}}:=\sclbmat{\mcl{F}\circ\mcl{A}\\\mcl{A}}$.
	Then, the following holds:
	\begin{enumerate}
		\item If $\mbf{u}$ solves the PDE defined by $\{\mbs{A}_{i},E,\mbs{F}\}$ with initial state $\mbf{u}^{0}\in X$, then $\mbf{v}:=\mscr{D}\mbf{u}$ solves the PIE defined by $\{\hat{\mcl{T}},\hat{\mcl{A}},\mcl{K}\}$ with initial state $\mbf{v}^{0}:=\mscr{D}\mbf{u}^{0}$.
		
		\item If $\mbf{v}$ solves the PIE defined by $\{\hat{\mcl{T}},\hat{\mcl{A}},\mcl{K}\}$ with initial state $\mbf{v}^{0}$, then $\mbf{u}:=\mcl{T}\mbf{v}$ solves the PDE defined by $\{\mbs{A}_{i},E,\mbs{F}\}$ with initial state $\mbf{u}^{0}:=\mcl{T}\mbf{v}^{0}$.
	\end{enumerate} 
\end{thm}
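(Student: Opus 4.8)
The plan is to prove both implications by transporting the algebraic identities of Thm.~\ref{thm:Tmap} and Cor.~\ref{cor:Rmap} through the time variable, using that the time-independent, bounded PI operators $\mcl{T}$, $\mscr{D}$, $\mcl{F}$ and $\mcl{R}$ commute with $\partial_{t}$. The definition of $\mcl{A}$ in~\eqref{eq:Aop} is the bridge: since $\mcl{A}=\tnf{M}_{\mbs{A}_{0}}\circ\mcl{T}+\tnf{M}_{\mbs{A}_{1}}\circ\mcl{R}+\tnf{M}_{\mbs{A}_{2}}\circ\mcl{I}$, evaluating $\mcl{A}\mbf{v}$ on $\mbf{v}=\mscr{D}\mbf{u}$ and using $\mcl{T}\mscr{D}\mbf{u}=\mbf{u}$, $\mcl{R}\mscr{D}\mbf{u}=\mbf{u}_{x}$ and $\mcl{I}\mscr{D}\mbf{u}=\mbf{u}_{xx}$ reproduces exactly the right-hand side of the PDE~\eqref{eq:PDE_standard}, and conversely.

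For the first statement I would set $\mbf{v}:=\mscr{D}\mbf{u}$ and first note that $\mbf{v}(t)=\mscr{D}\mbf{u}(t)\in Y$ for every $t$ by the first part of Thm.~\ref{thm:Tmap}, so the domain constraint $\mcl{K}\mbf{v}(t)=0$ holds. Writing $\mbf{u}=\mcl{T}\mbf{v}$, $\mbf{u}_{x}=\mcl{R}\mbf{v}$ and $\mbf{u}_{xx}=\mcl{I}\mbf{v}$ (Thm.~\ref{thm:Tmap}, Cor.~\ref{cor:Rmap}), the PDE~\eqref{eq:PDE_standard} becomes $\partial_{t}\mcl{T}\mbf{v}=\mbf{u}_{t}=\mcl{A}\mbf{v}$, which is precisely the second block of~\eqref{eq:PIE_standard}. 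Applying the bounded functional $\mcl{F}$ to this identity and using $\mcl{F}\mbf{u}=v_{0}$ together with $\partial_{t}(\mcl{F}\mbf{u})=\mcl{F}\mbf{u}_{t}$ yields $\dot{v}_{0}=(\mcl{F}\circ\mcl{A})\mbf{v}$, the first block of~\eqref{eq:PIE_standard}; together these give $\partial_{t}\hat{\mcl{T}}\mbf{v}=\hat{\mcl{A}}\mbf{v}$. The initial condition is immediate from $\mbf{v}(0)=\mscr{D}\mbf{u}^{0}=\mbf{v}^{0}$.

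For the second statement I would reverse the construction, setting $\mbf{u}:=\mcl{T}\mbf{v}$. Since $\mbf{v}(t)\in Y$, the second part of Thm.~\ref{thm:Tmap} gives $\mbf{u}(t)\in X$ as well as $\mscr{D}\mcl{T}\mbf{v}=\mbf{v}$, i.e.\ $\mcl{F}\mbf{u}=v_{0}$ and $\mbf{u}_{xx}=\mbf{v}_{1}=\mcl{I}\mbf{v}$, while Cor.~\ref{cor:Rmap} additionally gives $\mbf{u}_{x}=\partial_{x}\mcl{T}\mbf{v}=\mcl{R}\mbf{v}$. The second block of the PIE reads $\partial_{t}\mbf{u}=\partial_{t}\mcl{T}\mbf{v}=\mcl{A}\mbf{v}$, and substituting the three identities above into the definition of $\mcl{A}$ turns the right-hand side into $\mbs{A}_{0}\mbf{u}+\mbs{A}_{1}\mbf{u}_{x}+\mbs{A}_{2}\mbf{u}_{xx}$, so $\mbf{u}$ satisfies~\eqref{eq:PDE_standard} with $\mbf{u}(t)\in X$; the initial state is $\mbf{u}(0)=\mcl{T}\mbf{v}^{0}=\mbf{u}^{0}$. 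Note that the first PIE block is then automatically consistent, being recovered by applying $\mcl{F}$ to the second and using $\mcl{F}\mbf{u}=v_{0}$.

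The step I expect to require the most care is the transfer of Fr\'echet differentiability between $\mbf{u}$ and $\mbf{v}$, which is what legitimizes exchanging $\partial_{t}$ with the operators above. The clean resolution is to read differentiability of the PDE state in the $W_{2}^{2,n}$ topology and of the PIE state in the $\R^{m}\times L_{2}^{n}$ topology, and to observe that $\mscr{D}\colon X\to Y$ and $\mcl{T}\colon Y\to X$ are bounded between these spaces (the former because $\mcl{F}$ and $\partial_{x}^{2}\colon W_{2}^{2,n}\to L_{2}^{n}$ are bounded, the latter because $\mcl{T}$ has a polynomial semiseparable kernel mapping $L_{2}^{n}$ into $W_{2}^{2,n}$). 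Boundedness then yields both the differentiability of $\mbf{v}=\mscr{D}\mbf{u}$ (resp.\ $\mbf{u}=\mcl{T}\mbf{v}$) and the commutation $\partial_{t}\mscr{D}=\mscr{D}\partial_{t}$ (resp.\ $\partial_{t}\mcl{T}=\mcl{T}\partial_{t}$) invoked throughout; everything else reduces to the direct substitutions described above.
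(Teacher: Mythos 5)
Your proposal is correct and follows essentially the same route as the paper's own proof (Thm.~\ref{thm:PDE2PIE_appx} in Appx.~\ref{appx:proofs}): identify $\mbf{u}=\mcl{T}\mbf{v}$, $\mbf{u}_{x}=\mcl{R}\mbf{v}$, $\mbf{u}_{xx}=\mcl{I}\mbf{v}$ via Thm.~\ref{thm:Tmap} and Cor.~\ref{cor:Rmap}, substitute into the definition of $\mcl{A}$ in~\eqref{eq:Aop} to pass between~\eqref{eq:PDE_standard} and the second block of~\eqref{eq:PIE_standard}, and apply $\mcl{F}$ together with $\mcl{F}\mcl{T}\smallbmat{v_{0}\\\mbf{v}_{1}}=v_{0}$ to obtain the first block. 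Your explicit handling of the Fr\'echet-differentiability transfer through boundedness of $\mscr{D}$ and $\mcl{T}$ is a point the paper leaves implicit, but it is a refinement of the same argument rather than a different one.
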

\begin{proof}
	The result follows from the fact that, by Thm.~\ref{thm:Tmap} and Cor.~\ref{cor:Rmap}, any $\mbf{u}\in X$ satisfies $\mbf{u}=\mcl{T}\mbf{v}$ and $\mbf{u}_{x}=\mcl{R}\mbf{v}$ for $\mbf{v}=\mscr{D}\mbf{u}\in Y$, and conversely, any $\mbf{v}\in Y$ satisfies $\mbf{v}=\mscr{D}\mbf{u}$, $\mcl{R}\mbf{v}(t)=\mbf{u}_{x}$, for $\mbf{u}=\mcl{T}\mbf{v}\in X$. 
	By definition of $\hat{\mcl{T}}$ and $\hat{\mcl{A}}$, it then follows that $\mbf{u}$ satisfies~\eqref{eq:PDE_standard} if and only if $\mbf{v}$ satisfies~\eqref{eq:PIE_standard}.
	A full proof is provided in Thm.~\ref{thm:PDE2PIE_appx} in Appx.~\ref{appx:proofs}.
\end{proof}

By Thm.~\ref{thm:PDE2PIE}, any PDE of the form in~\eqref{eq:PDE_standard} admits an equivalent PIE representation of the form in~\eqref{eq:PIE_standard}, so that for any solution to either system we can define a solution to the other. The following example illustrates what this PIE representation looks like for a simple reaction-diffusion PDE.


\begin{example}\label{ex:Neumann_PIE}
	To illustrate an application of Thm.~\ref{thm:PDE2PIE}, consider the reaction-diffusion equation
	\begin{align}\label{eq:Ex_reaction_diffusion_PDE}
		\mbf{u}_{t}(t,x)&=\mbf{u}_{xx}(t,x) +\lambda\mbf{u}(t,x),	& &\hspace*{-1cm}x\in(-1,1),\\
		\mbf{u}(t,-1)&=\mbf{u}(t,1),\quad\mbf{u}_{x}(t,-1)=\mbf{u}_{x}(t,1),	&	t&\geq 0.		\notag
	\end{align}
	Let $\mcl{F}\mbf{u}:=\frac{1}{2}\int_{-1}^{1}\mbf{u}(x)dx$, and define the operator $\mcl{T}_{1}$ as in Ex.~\ref{ex:Neumann_Tmap}. Then, by Thm.~\ref{thm:Tmap}, $\mbf{u}$ is a solution to the PDE~\eqref{eq:Ex_reaction_diffusion_PDE} if and only if $(v_{0},\mbf{v}_{1}):=(\mcl{F}\mbf{u},\mbf{u}_{xx})$ is a solution to the PIE
	\begin{align}\label{eq:Ex_reaction_diffusion_PIE}
		\dot{v}_{0}(t)&=\lambda v_{0}(t),	\\
		\partial_{t}(v_{0}(t)+\mcl{T}_{1}\mbf{v}_{1}(t))&=\mbf{v}_{1}(t)+\lambda(v_{0}(t)+\mcl{T}_{1}\mbf{v}_{1}(t)),	\notag\\
		{\textstyle \int_{-1}^{1}}\mbf{v}_{1}(t,x)dx&=0.	\notag
	\end{align}
	Conversely, $(v_{0},\mbf{v}_{1})$ will satisfy the PIE~\eqref{eq:Ex_reaction_diffusion_PIE} if and only if $\mbf{u}:=v_{0}+\mcl{T}_{1}\mbf{v}_{1}$ satisfies the PDE~\eqref{eq:Ex_reaction_diffusion_PDE}.
	Note here that, for any PDE solution $\mbf{u}$, the trajectory $\mbf{u}^*(t):=\mcl{F}\mbf{u}(t)=v_{0}(t)$ is a solution to the PDE as well, satisfying $\mbf{u}_{xx}(t)=\mbf{v}_{1}(t)=0$. In the next section, we show how we can verify stability of this trajectory, in the sense that $\lim_{t\to\infty}\norm{\mbf{u}(t)-\mbf{u}^*(t)}=0$.
\end{example}

\section{Stability Analysis in the PIE Representation}\label{sec:LPI}

In the previous section, it was shown how a broad class of linear, 1D PDEs can be equivalently represented as PIEs. In this section, we use this equivalence to show how stability of PDE trajectories can be tested by solving an operator inequality on PI operator variables. In order to do so, however, we must first define a suitable notion of stability. For example, for the reaction-diffusion equation in Ex.~\ref{ex:Neumann_PIE}, solutions $\mbf{u}(t)$ do not converge to any global equilibrium, but do all converge asymptotically to some uniform function $\mcl{F}\mbf{u}(t):=\frac{1}{2}\int_{-1}^{1}\mbf{u}(t,x)dx$ -- i.e. $\lim_{t\to\infty}\|\mbf{u}(t)-\mcl{F}\mbf{u}(t)\|_{L_{2}}=0$ (see Appx.~\ref{appx:reaction_diffusion}). We define a notion of exponential stability that encapsulates this kind of convergence as follows.

\begin{defn}\label{defn:stability}
	For a PDE defined by $\tnf{G}_{\tnf{pde}}=\{\mbs{A}_{i},E,\mbs{F}\}$ and an operator $\mcl{S}:L_{2}^{n}\to L_{2}^{n}$, we say that trajectories defined by $\mcl{S}$ are exponentially stable with rate $\alpha\geq 0$ if there exists some $M\in\R$ such that
	\begin{equation*}
		\norm{\mbf{u}(t)-\mcl{S}\mbf{u}(t)}_{L_{2}}\leq Me^{\alpha t}\norm{\mbf{u}(0)-\mcl{S}\mbf{u}(0)}_{L_{2}},\quad \forall t\geq 0,
	\end{equation*}
	for all solutions $\mbf{u}$ to the PDE defined by $\tnf{G}_{\tnf{pde}}$,
\end{defn}

Defn.~\ref{defn:stability} offers a definition of stability that does not require all solutions to converge to a unique equilibrium point.
Instead, the choice of $\mcl{S}$ offers substantial freedom in testing desired convergence behaviour of solutions, with e.g. $\mcl{S}=I_{n}$ posing no constraints on solutions, whilst $\mcl{S}=0$ requires global exponential stability of $\mbf{u}^*\equiv0$.
Given the decomposition $\mbf{u}(t)=\mcl{T}_{0}\mcl{F}_{0}\mbf{u}(t)+\mcl{T}_{1}\mbf{u}_{xx}(t)$ of the PDE solutions, an obvious, moderate choice is $\mcl{S}=\mcl{T}_{0}\circ\mcl{F}$, requiring $\lim_{t\to\infty}\norm{\mcl{T}_{1}\mbf{u}_{xx}(t)}_{L_{2}}=0$ even if $\lim_{t\to\infty}\norm{\mbf{u}(t)}_{L_{2}}\neq0$.

Having parameterized a notion of stability by operators $\mcl{S}$, we now consider how we may actually verify stability for a given choice of such $\mcl{S}$. For simplicity, let $\mcl{S}=0$ (testing exponential stability of $\mbf{u}^*=0$), and parameterize a candidate Lyapunov functional $V(\mbf{u})=\ip{\mbf{u}}{\mcl{P}\mbf{u}}=\ip{\mcl{T}\mbf{v}}{\mcl{P}\mcl{T}\mbf{v}}$ by a PI operator $\mcl{P}$. Taking the Lie derivative $\dot{V}$ of this functional along solutions to the PIE~\eqref{eq:PIE_intermediate}, we find
\begin{equation*}
	\dot{V}(\mcl{T}\mbf{v})=\ip{\mbf{v}}{[\mcl{T}^*\mcl{P}\mcl{A}+\mcl{A}^*\mcl{P}\mcl{T}]\mbf{v}}_{L_{2}},	\qquad\forall \mbf{v}\in Y.
\end{equation*}
Here,
$\mcl{Q}:=\mcl{T}^*\mcl{P}\mcl{A}+\mcl{A}^*\mcl{P}\mcl{T}$ is again a PI operator, so that stability can be tested by solving the Linear PI Inequality (LPI) defined by $\mcl{P}\succ 0$ and $\mcl{Q}\preceq 0$. 
However, since $\dot{V}(\mcl{T}\mbf{v})\leq 0$ need only hold for $\mbf{v}\in Y:=\{\mbf{v}\in\R^{m}\times L_{2}\mid \mcl{K}\mbf{v}=0\}$, simply imposing $\mcl{Q}\preceq 0$ may introduce significant conservatism.
The challenge, then, is to enforce $\ip{\mbf{v}}{\mcl{Q}\mbf{v}}_{L_{2}}\leq 0$ only for $\mbf{v}$ satisfying $\mcl{K}\mbf{v}=0$. The following proposition shows how this can be achieved using an approach similar to Finsler's lemma, introducing a slack operator variable $\mcl{X}$.

\begin{prop}\label{prop:LPI}
	For any $\tnf{G}_{\tnf{pde}}=\{\mbs{A}_{i},E,\mbs{F}\}$, let associated operators $\tnf{G}_{\tnf{pie}}:=\{\mcl{T},\mcl{A},\mcl{K}\}$ be as in Thm.~\ref{thm:PDE2PIE}. Let $\mcl{S}\in\PIset^{(0,n)\times(0,n)}$, and define $\tilde{\mcl{T}}:=(I_{n}-\mcl{S})\circ \mcl{T}$ and $\tilde{\mcl{A}}:=(I_{n}-\mcl{S})\circ \mcl{T}$.
	If there exist $\epsilon>0$, $\alpha\geq 0$, $\mcl{P}\in\PIset^{(0,n)\times (0,n)}$ and $\mcl{X}\in\PIset^{(0,n)\times(m,0)}$ such that
	\begin{align}\label{eq:stability_LPI}
		&\mcl{P}=\mcl{P}^*\succeq\epsilon^2 I,	\\
		&\tilde{\mcl{A}}^*\mcl{P}\tilde{\mcl{T}} 		+\tilde{\mcl{T}}^*\mcl{P}\tilde{\mcl{A}}\preceq -2\alpha\tilde{\mcl{T}}^*\mcl{P}\tilde{\mcl{T}}-\mcl{X}\mcl{K}-\mcl{K}^*\mcl{X}^*,\notag
	\end{align}
	then, trajectories defined by $\mcl{S}$ are exponentially stable with rate $\alpha$ for the PDE defined by $\tnf{G}_{\tnf{pde}}$.
\end{prop}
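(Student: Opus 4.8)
The plan is to run a Lyapunov argument in the PIE representation, using the slack operator $\mcl{X}$ to relax the operator inequality from the constraint set $Y$ to all of $\R^{m}\times L_{2}^{n}$, in the spirit of Finsler's lemma. First I would fix an arbitrary solution $\mbf{u}$ to the PDE defined by $\tnf{G}_{\tnf{pde}}$ and set $\mbf{v}:=\mscr{D}\mbf{u}$. By Thm.~\ref{thm:PDE2PIE}, $\mbf{v}$ solves the associated PIE, so that $\mbf{v}(t)\in Y$ (hence $\mcl{K}\mbf{v}(t)=0$) and $\partial_{t}\mcl{T}\mbf{v}(t)=\mcl{A}\mbf{v}(t)$ for all $t\ge 0$, while $\mbf{u}(t)=\mcl{T}\mbf{v}(t)$. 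The key observation is that the error quantity to be bounded is exactly $\mbf{u}(t)-\mcl{S}\mbf{u}(t)=(I_{n}-\mcl{S})\mcl{T}\mbf{v}(t)=\tilde{\mcl{T}}\mbf{v}(t)$, which motivates the candidate functional $V(t):=\ip{\tilde{\mcl{T}}\mbf{v}(t)}{\mcl{P}\tilde{\mcl{T}}\mbf{v}(t)}_{L_{2}}$.

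Next I would establish a two-sided bound on $V$. Since $\mcl{P}=\mcl{P}^{*}\succeq\epsilon^{2}I$ and $\mcl{P}$ is a bounded PI operator, $\epsilon^{2}\norm{\tilde{\mcl{T}}\mbf{v}(t)}_{L_{2}}^{2}\le V(t)\le \norm{\mcl{P}}\,\norm{\tilde{\mcl{T}}\mbf{v}(t)}_{L_{2}}^{2}$. I would then differentiate $V$ along the trajectory. Using Fr\'echet differentiability of the solution together with $\partial_{t}\tilde{\mcl{T}}\mbf{v}=(I_{n}-\mcl{S})\partial_{t}\mcl{T}\mbf{v}=(I_{n}-\mcl{S})\mcl{A}\mbf{v}=\tilde{\mcl{A}}\mbf{v}$ (so that the intended definition is $\tilde{\mcl{A}}=(I_{n}-\mcl{S})\circ\mcl{A}$, correcting the evident typo in the statement), and the definition of the adjoint, I obtain
\begin{equation*}
	\dot{V}(t)=\ip{\mbf{v}(t)}{\bl(\tilde{\mcl{A}}^{*}\mcl{P}\tilde{\mcl{T}}+\tilde{\mcl{T}}^{*}\mcl{P}\tilde{\mcl{A}}\br)\mbf{v}(t)}_{L_{2}}.
\end{equation*}

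The crux is then to apply the LPI~\eqref{eq:stability_LPI} to this quadratic form and eliminate the slack terms. The second inequality in~\eqref{eq:stability_LPI} holds as a genuine operator inequality on all of $\R^{m}\times L_{2}^{n}$, but I only ever evaluate it at $\mbf{v}(t)\in Y$. Since $\mcl{K}\mbf{v}(t)=0$, both slack contributions vanish along the trajectory: $\ip{\mbf{v}}{\mcl{X}\mcl{K}\mbf{v}}_{L_{2}}=0$ and $\ip{\mbf{v}}{\mcl{K}^{*}\mcl{X}^{*}\mbf{v}}_{L_{2}}=\ip{\mcl{K}\mbf{v}}{\mcl{X}^{*}\mbf{v}}=0$. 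Hence the inequality reduces on $Y$ to $\dot{V}(t)\le -2\alpha\ip{\mbf{v}(t)}{\tilde{\mcl{T}}^{*}\mcl{P}\tilde{\mcl{T}}\mbf{v}(t)}_{L_{2}}=-2\alpha V(t)$. This is precisely the role of $\mcl{X}$: it renders the operator inequality feasible globally while leaving untouched the value of the form on the constraint manifold on which the dynamics actually evolve.

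Finally, Gr\"onwall's inequality gives $V(t)\le e^{-2\alpha t}V(0)$, and combining this with the two-sided bound on $V$ yields
\begin{equation*}
	\norm{\tilde{\mcl{T}}\mbf{v}(t)}_{L_{2}}\le \frac{\sqrt{\norm{\mcl{P}}}}{\epsilon}\,e^{-\alpha t}\norm{\tilde{\mcl{T}}\mbf{v}(0)}_{L_{2}}.
\end{equation*}
Since $\tilde{\mcl{T}}\mbf{v}(t)=\mbf{u}(t)-\mcl{S}\mbf{u}(t)$, this is exactly the exponential bound of Defn.~\ref{defn:stability} with $M=\sqrt{\norm{\mcl{P}}}/\epsilon$. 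I expect the main obstacle to be the rigorous differentiation of $V$: one must justify interchanging $\partial_{t}$ with the bounded PI operators and invoking $\partial_{t}\mcl{T}\mbf{v}=\mcl{A}\mbf{v}$ in the Fr\'echet sense guaranteed by the solution definition, and must confirm that $\mbf{v}(t)$ remains in $Y$ for every $t$ so that the slack terms genuinely vanish along the whole trajectory. Once the constraint $\mcl{K}\mbf{v}(t)=0$ is in hand, the remaining manipulations are routine.
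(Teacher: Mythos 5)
Your proposal is correct and follows essentially the same route as the paper's own proof: the same Lyapunov functional $V(\mbf{v})=\langle\tilde{\mcl{T}}\mbf{v},\mcl{P}\tilde{\mcl{T}}\mbf{v}\rangle_{L_{2}}$, the same reduction via Thm.~\ref{thm:PDE2PIE} and $\partial_{t}\tilde{\mcl{T}}\mbf{v}=\tilde{\mcl{A}}\mbf{v}$, the same vanishing of the slack terms on $Y$ via $\mcl{K}\mbf{v}(t)=0$, and the same Gr\"onwall--Bellman conclusion with $M=\norm{\mcl{P}}_{\tnf{op}}^{1/2}/\epsilon$. You also correctly identified the typo in the statement (the intended definition is $\tilde{\mcl{A}}:=(I_{n}-\mcl{S})\circ\mcl{A}$), which is exactly what the paper's proof uses.
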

\begin{proof}
	Consider the functional $V:\R^{m}\times L_{2}^{n}[a,b]\to \R$ defined by $V(\mbf{v}):=\langle\tilde{\mcl{T}}\mbf{v},\mcl{P}\tilde{\mcl{T}}\mbf{v}\rangle_{L_{2}}$ for $\mbf{v}\in\R^{m}\times L_{2}^{n}$.
	Since $\mcl{P}\succeq\epsilon^2 I$, this function is bounded as
	\begin{equation*}
		\epsilon^2\bl\|\tilde{\mcl{T}}\mbf{v}\br\|_{L_{2}}^2\leq V(\mbf{v})\leq \mu^2\bl\|\tilde{\mcl{T}}\mbf{v}\br\|_{L_{2}}^2,
	\end{equation*}
	where $\mu:=\norm{\mcl{P}}_{\tnf{op}}^{1/2}$.
	Now, let $\mbf{u}$ be an arbitrary solution to the PDE defined by $\tnf{G}_{pde}$, and let $\mbf{v}:=\mscr{D}\mbf{u}$. Then, by Thm.~\ref{thm:PDE2PIE}, $\mbf{v}$ is a solution to the PIE defined by $\tnf{G}_{\tnf{pie}}$, and $\mbf{u}=\mcl{T}\mbf{v}$. It follows that $\mbf{v}$ satisfies
	\begin{equation*}
		\partial_{t}\tilde{\mcl{T}}\mbf{v}(t)
		=(I_{n}-\mcl{S})\partial_{t}\mcl{T}\mbf{v}(t)
		=(I_{n}-\mcl{S})\mcl{A}\mbf{v}(t)
		=\tilde{\mcl{A}}\mbf{v}(t),
	\end{equation*}
	and therefore
	\begin{align*}
		&\partial_{t}V(\mbf{v}(t))	
		=\bl\langle\partial_{t}\tilde{\mcl{T}}\mbf{v}(t),\mcl{P}\tilde{\mcl{T}}\mbf{v}(t)\br\rangle_{L_{2}} \!\!+\bl\langle\tilde{\mcl{T}}\mbf{v}(t),\mcl{P}\partial_{t}\tilde{\mcl{T}}\mbf{v}(t)\br\rangle_{L_{2}}	\\
		&=\bl\langle\mbf{v}(t),[\tilde{\mcl{A}}^*\mcl{P}\tilde{\mcl{T}}+\tilde{\mcl{T}}^*\mcl{P}\tilde{\mcl{A}}]\mbf{v}(t)\br\rangle_{L_{2}}	\\
		&\leq -\bl\langle\mbf{v}(t),[2\alpha\tilde{\mcl{T}}^*\mcl{P}\tilde{\mcl{T}}+\mcl{X}\mcl{K}+\mcl{K}^*\mcl{X}^*]\mbf{v}(t)\br\rangle_{L_{2}}	\\
		&=-2\alpha\bl\langle\tilde{\mcl{T}}\mbf{v}(t),\mcl{P}\tilde{\mcl{T}}\mbf{v}(t)\rangle_{L_{2}}
		=-2\alpha V(\mbf{v}(t)),
	\end{align*}
	where we remark that $\mcl{K}\mbf{v}(t)=0$ since $\mbf{v}(t)\in Y$, for all $t\geq 0$. Applying the Gr\"onwall-Bellman inequality, we find $V(\mbf{v}(t))\leq V(\mbf{v}(0))e^{-2\alpha t}$,
	and therefore $\epsilon^2\|\tilde{\mcl{T}}\mbf{v}(t)\|_{L_{2}}^2
	\leq \mu^2\|\tilde{\mcl{T}}\mbf{v}(0)\|_{L_{2}}^2e^{-2\alpha t}$. Since $\tilde{\mcl{T}}\mbf{v}=(I_{n}-\mcl{S})\mcl{T}\mbf{v}=\mbf{u}-\mcl{S}\mbf{u}$,
	it follows that trajectories defined by $\mcl{S}$ are exponentially stable with rate $\alpha$ for the PDE defined by $\tnf{G}_{\tnf{pde}}$.
\end{proof}

Using Prop.~\ref{prop:LPI}, for any PDE as in~\eqref{eq:PDE_standard}, exponential stability (as per Defn.~\ref{defn:stability}) of trajectories defined by a PI operator $\mcl{S}$ can be tested by solving the LPI~\eqref{eq:stability_LPI} for the associated PIE representation.
In the following section, we will use this result to verify stability properties for two PDE examples.

\section{Numerical Examples}\label{sec:examples}

In this section, we apply the proposed methodology to verify stability of two example PDEs. In each case, the LPI~\eqref{eq:stability_LPI} is implemented in Matlab using PIETOOLS~\cite{shivakumar2025PIETOOLS2024}, numerically parameterizing $\mcl{P}$ using monomials of degree at most $2d=6$, and solving the resulting semidefinite program with Mosek~\cite{mosek}. We refer to~\cite{shivakumar2024GPDE} for more details on how LPIs can be solved using semidefinite programming.

\subsection{Reaction-Diffusion Equation with Periodic BCs}\label{sec:examples:reaction_diffusion}

Consider the reaction-diffusion equation from Ex.~\ref{ex:Neumann_PIE},\\[-1.6em]
\begin{align}\label{eq:Ex_reaction_diffusion_PDE_2}
	\mbf{u}_{t}(t,x)&=\mbf{u}_{xx}(t,x) +\lambda\mbf{u}(t,x),	& &\hspace*{-1cm}x\in(-1,1),\\
	\mbf{u}(t,-1)&=\mbf{u}(t,1),\quad\mbf{u}_{x}(t,-1)=\mbf{u}_{x}(t,1),	&	t&\geq 0.		\notag
	\\[-1.6em]		\notag
\end{align}
For any $\lambda\in[0,\pi^2)$, the trajectories defined by the operator $\mcl{S}\mbf{u}(t):=\frac{1}{2}\int_{-1}^{1}\mbf{u}(t,x)dx$ are exponentially stable for this PDE in the sense of Defn.~\ref{defn:stability}, with rate $\alpha=\pi^2-\lambda$  (see Appx.~\ref{appx:reaction_diffusion}). We can verify this using Prop.~\ref{prop:LPI}, setting $\mcl{T}=\bmat{I&\mcl{T}_{1}}$ and $\mcl{A}=\bmat{\lambda&I-\lambda\mcl{T}_{1}}$, for $\mcl{T}_{1}$ as in Ex.~\ref{ex:Neumann_Tmap}. Using PIETOOLS, exponential stability with rate $\alpha=0$ can be verified up to $\lambda=9.8695$ approaching the true stability limit $\lambda=\pi^2\approx 9.8696$. Fixing several values of $\lambda$, exponential stability can be verified with rates $\alpha$ as in Tab.~\ref{tab:decay_reaction_diffusion_mean0}, also approaching the true rates $\alpha_{\max}=\pi^2-\lambda$.

\begin{table}[h]
	\vspace*{-0.1cm}
	\setlength{\tabcolsep}{3pt}
	\begin{tabular}{c|cccccccc}
		$\lambda$ & 0 & 1.5 & 3 & 4.5 & 6 & 7.5 & 9 & 9.5 \\\hline
		$\alpha$ & 9.8690 & 8.3691 & 6.8692 & 5.3693 & 3.8695 & 2.3695 & 0.8696 & 0.3696 \\
		$\alpha_{\max}$ & 9.8696 & 8.3696 & 6.8696 & 5.3696 & 3.8696 & 2.3696 & 0.8696 & 0.3696
	\end{tabular}
	\caption{Rates $\alpha$ for which exponential stability of trajectories\\ defined by $\mcl{S}\mbf{u}:=\frac{1}{2}\int_{-1}^{1}\mbf{u}(x)dx$ for the PDE~\eqref{eq:Ex_reaction_diffusion_PDE_2} was verified\\[-0.3em] with Prop.~\ref{prop:LPI}, along with the analytic rate $\alpha_{\max}=\pi^2-\lambda$.}\label{tab:decay_reaction_diffusion_mean0}
	\vspace*{-0.5cm}
\end{table}


\subsection{Wave Equation with Neumann BCs}

Consider now the following wave equation with stabilizing feedback and Neumann boundary conditions, \\[-1.6em]
\begin{align}\label{eq:Ex_wave}
	\mbf{u}_{tt}(t,x)&=\!\mbf{u}_{xx}(t,x) -\!2k\mbf{u}_{t}(t,x)-\!k^2\mbf{u}(t,x),\enspace t\geq 0,	\\
	\mbf{u}_{x}(t,0)&=\mbf{u}_{x}(t,1)=0,\hspace*{3.2cm} x\in(0,1). \notag
	\\[-1.6em]\notag
\end{align}
To convert this PDE to a PIE representation, we introduce $\mbs{\phi}(t,x)=(\mbf{u}(t,x),\mbf{u}_{t}(t,x))^T$, expanding the system as\\[-1.2em]
\begin{align*}
	\mbs{\phi}_{t}(t,x)&={\small\bmat{0&1\\-k^2&\!\!-2k}}\mbs{\phi}(t,x)+{\small\bmat{0&\!0\\1&\!0}}\mbs{\phi}_{xx}(t,x),	\enspace t\geq 0. \\[-1.8em]
\end{align*}
A PIE representation can then be constructed using Thm.~\ref{thm:PDE2PIE}, introducing fundamental state components 
$v_{0}(t)=(\mcl{F}\mbs{\phi})(t):=\int_{0}^{1}(4-6x)\mbs{\phi}(t,x)dx$ (as in Lem.~\ref{lem:BC_extra}) and $\mbf{v}_{1}(t)=\mbs{\phi}_{xx}(t)$.
The trivial solution $\mbf{u}^*\equiv0$ can be shown to be exponentially stable if and only if $k\geq 0$, with rate $\alpha=k$ (see Appx.~\ref{appx:wave_eq}). Using PIETOOLS, exponential stability can be verified for $k\geq 0.0006$ with rate $\alpha=0$, and with rates $\alpha$ as in Tab.~\ref{tab:decay_wave} for several values of $k>0$.

\begin{table}[h]
	\setlength{\tabcolsep}{5pt}
	\vspace*{-0.1cm}
	\begin{tabular}{c|cccccccc}
		$k$ & 1 & 2 & 3 & 4 & 5 & 6 & 7 & 8 \\\hline
		$\alpha$ & 0.981 & 1.997 & 2.993 & 3.996 & 4.994 & 5.975 & 6.969 & 7.957
	\end{tabular}
	\caption{Maximal rates $\alpha$ for which exponential stability of $\mbs{\phi}^*\equiv 0$\\ ($S^*=0$ in Defn.~\ref{defn:stability}) for the PDE~\eqref{eq:Ex_wave} was verified with Prop.~\ref{prop:LPI},\\[-0.4em] for several values of $k$. The analytic rate is $\alpha_{\max}=k$.}\label{tab:decay_wave}
	\vspace*{-0.5cm}
\end{table}


\section{Conclusion}

In this paper, it was shown how for 2nd-order PDEs with periodic as well as more general linear boundary conditions, convergence of solutions to trajectories in the nullspace of $\partial_{x}^2$ can be tested.
To this end, it was first shown how for a PDE domain $X$ defined by such boundary conditions, we can define a functional $\mcl{F}:X\to\R^{m}$ and PI operator $\mcl{T}_{1}$, such that $\partial_{x}^2\circ\mcl{T}_{1}=I$ and $\mcl{F}\circ\mcl{T}_{1}=0$. Next, a PI operator $\mcl{T}_{0}:\R^{m}\to X$ was defined such that $\mbf{u}=\mcl{T}_{0}\mcl{F}\mbf{u}+\mcl{T}_{1}\mbf{u}_{xx}$ for any $\mbf{u}\in X$, where now $\partial_{x}^2\mcl{T}_{0}\mcl{F}_{0}\mbf{u}=0$.
This relation was then used to derive a PIE representation of linear PDEs in terms of $(\mcl{F}\mbf{u}(t),\mbf{u}_{xx}(t))$, and it was shown how exponential stability 
(in the sense of Defn.~\ref{defn:stability}) 
of both the trivial solution $\mbf{u}(t)=0$ and of trajectories $\mcl{T}_{0}\mcl{F}\mbf{u}(t)$ can be tested by solving an LPI. 
Results in this work may be readily combined with earlier results on the PIE methodology for e.g. estimator and controller synthesis, and stability analysis of nonlinear PDEs.

\bibliographystyle{IEEEtran}
\bibliography{bibfile}

\newpage

\begin{appendices}

\section{Proofs of Several Results}\label{appx:proofs}

In this appendix, we prove several of the results presented in Sec.~\ref{sec:Tmap} and Sec.~\ref{sec:PDE2PIE}. To start, recall that for $E\in\R^{m\times 4n}$ and $\mbs{F}\in L_{2}^{m\times n}[a,b]$ we define the Sobolev subspace
\begin{equation}\label{eq:Xset_appx}
	X\!:=\!\bbl\{\mbf{u}\in W_{2}^{2,n}\,\bbl|\, E\smallbmat{\mbf{u}(a)\\\mbf{u}(b)\\\mbf{u}_{x}(a)\\\mbf{u}_{x}(b)}+\int_{a}^{b}\!\!\mbs{F}(x)\mbf{u}(x)dx=0\bbr\}.
\end{equation}
We define an alternative representation of $X$ as follows.
\begin{defn}\label{defn:Gmat_appx}
	For given $E\in \R^{m\times 4n}$ and $\mbs{F}\in L_{2}^{m\times n}$, define $G_{E,\mbs{F}}\in \R^{m \times 2n}$ and $\mbs{H}_{E,\mbs{F}}\in L_2^{m\times n}$ by
	\begin{align*}
		&G_{E,\mbs{F}}:=E\scbrray{cc}{I_{n}&0_{n}\\I_{n}&(b-a)I_{n}\\0_{n}&I_{n}\\0_{n}&I_{n}}+\int_{a}^{b}\mbs{F}(x)\bmat{I_{n}&(x-a)I_{n}}dx,\\
		&\mbs{H}_{E,\mbs{F}}(x):=-E{\sclbmat{0_{n}\\(b-x)I_{n}\\0_{n}\\I_{n}}}-\int_{x}^{b}(\theta-x)\mbs{F}(\theta)d\theta,
	\end{align*}
	and define the subspace $X_{E,\mbs{F}}\subseteq W_{2}^{2,n}$ as
	\begin{align*}
		&X_{E,\mbs{F}}	\\[-0.6em]
		&~:= \bbl\{\mbf u\in W_{2}^{2,n}\,\bbl|\,		G_{E,\mbs{F}}
		{\slbmat{\mbf{u}(a)\\\mbf{u}_{x}(a)}}=\int_{a}^{b}\!\!\mbs{H}_{E,\mbs{F}}(x)\mbf{u}_{xx}(x)dx\bbr\}.
	\end{align*}
\end{defn}
The following lemma shows that, indeed, $X=X_{E,\mbs{F}}$
\begin{lem}\label{lem:BC_expansion_appx}
	For $E\in \R^{m\times 4n}$ and $\mbs{F}\in L_{2}^{m\times n}$, let $X$ be as in~\eqref{eq:Xset_appx}, and $X_{E,\mbs{F}}$ as in Defn.~\ref{defn:Gmat_appx}. Then $X=X_{E,\mbs{F}}$.
\end{lem}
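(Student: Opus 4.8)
The plan is to substitute the exact second-order Taylor expansion with integral remainder,
\[
\mbf{u}(x)=\mbf{u}(a)+(x-a)\mbf{u}_{x}(a)+\int_{a}^{x}(x-\theta)\mbf{u}_{xx}(\theta)\,d\theta,
\]
valid for every $\mbf{u}\in W_{2}^{2,n}$, into the boundary-condition functional defining $X$ in~\eqref{eq:Xset_appx}, and then to regroup the result so that it matches the constraint defining $X_{E,\mbs{F}}$ in Defn.~\ref{defn:Gmat_appx}. Since the manipulation is a pointwise identity on all of $W_{2}^{2,n}$, the equivalence of the two constraints yields $X=X_{E,\mbs{F}}$ immediately, with no inclusion argument beyond the algebraic rearrangement.

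First I would reduce each of the four boundary values to the pair $(\mbf{u}(a),\mbf{u}_{x}(a))$ plus a term linear in $\mbf{u}_{xx}$. Evaluating the Taylor identity and its derivative at $x=b$ gives $\mbf{u}(b)=\mbf{u}(a)+(b-a)\mbf{u}_{x}(a)+\int_{a}^{b}(b-\theta)\mbf{u}_{xx}(\theta)\,d\theta$ and $\mbf{u}_{x}(b)=\mbf{u}_{x}(a)+\int_{a}^{b}\mbf{u}_{xx}(\theta)\,d\theta$. Collecting these, the stacked vector $(\mbf{u}(a),\mbf{u}(b),\mbf{u}_{x}(a),\mbf{u}_{x}(b))^{T}$ equals the constant matrix $\scbrray{cc}{I_{n}&0_{n}\\I_{n}&(b-a)I_{n}\\0_{n}&I_{n}\\0_{n}&I_{n}}$ applied to $(\mbf{u}(a),\mbf{u}_{x}(a))^{T}$, plus $\int_{a}^{b}{\sclbmat{0_{n}\\(b-\theta)I_{n}\\0_{n}\\I_{n}}}\mbf{u}_{xx}(\theta)\,d\theta$. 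Substituting the same identity into $\int_{a}^{b}\mbs{F}(x)\mbf{u}(x)\,dx$ splits it analogously into $\int_{a}^{b}\mbs{F}(x)\bmat{I_{n}&(x-a)I_{n}}dx$ acting on $(\mbf{u}(a),\mbf{u}_{x}(a))^{T}$ plus a double integral of $\mbf{u}_{xx}$. Premultiplying the first group by $E$ and adding the two contributions shows that the coefficient of $(\mbf{u}(a),\mbf{u}_{x}(a))^{T}$ is precisely $G_{E,\mbs{F}}$.

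The main obstacle is the double-integral term arising from $\mbs{F}$, which I would handle by interchanging the order of integration (Fubini/Tonelli, justified since $\mbs{F},\mbf{u}_{xx}\in L_{2}$ on the bounded interval):
\[
\int_{a}^{b}\mbs{F}(x)\!\int_{a}^{x}(x-\theta)\mbf{u}_{xx}(\theta)\,d\theta\,dx=\int_{a}^{b}\Bigl(\int_{\theta}^{b}(x-\theta)\mbs{F}(x)\,dx\Bigr)\mbf{u}_{xx}(\theta)\,d\theta.
\]
After this swap, the total $\mbf{u}_{xx}(\theta)$-coefficient from the $E$-term and the $\mbs{F}$-term combines into $E{\sclbmat{0_{n}\\(b-\theta)I_{n}\\0_{n}\\I_{n}}}+\int_{\theta}^{b}(x-\theta)\mbs{F}(x)\,dx$, which equals $-\mbs{H}_{E,\mbs{F}}(\theta)$ upon renaming the dummy variable; the sign and dummy-variable bookkeeping here is the only delicate point. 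Consequently the constraint defining $X$ rearranges to $G_{E,\mbs{F}}(\mbf{u}(a),\mbf{u}_{x}(a))^{T}=\int_{a}^{b}\mbs{H}_{E,\mbs{F}}(\theta)\mbf{u}_{xx}(\theta)\,d\theta$, which is exactly the defining condition of $X_{E,\mbs{F}}$. Since the chain of equalities holds for every $\mbf{u}\in W_{2}^{2,n}$, the two membership conditions are equivalent and therefore $X=X_{E,\mbs{F}}$.
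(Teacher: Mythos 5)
Your proof is correct and follows essentially the same route as the paper's: substitute the Taylor expansion with integral remainder into the constraint defining $X$, use Fubini to swap the order of integration in the $\mbs{F}$-term, and regroup the coefficients of $(\mbf{u}(a),\mbf{u}_{x}(a))^{T}$ and $\mbf{u}_{xx}$ into $G_{E,\mbs{F}}$ and $-\mbs{H}_{E,\mbs{F}}$, respectively. The sign and dummy-variable bookkeeping you flag as the delicate point is handled correctly, so nothing is missing.
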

\begin{proof}
	Fix arbitrary $\mbf{u}\in W_{2}^{2,n}[a,b]$. Then, by Taylor's theorem with integral form of the remainder,
	\begin{equation*}
		\mbf{u}(x)=\mbf{u}(a) +(x-a)\mbf{u}_{x}(a) +\int_{a}^{x}(x-\theta)\mbf{u}_{xx}(\theta)d\theta.
	\end{equation*}
	It follows that
	\begin{align*}
		\slbmat{\mbf{u}(b)\\\mbf{u}_{x}(b)}&=\slbmat{\mbf{u}(a) +(b-a)\mbf{u}_{x}(a) +\int_{a}^{b}(b-\theta)\mbf{u}_{xx}(\theta)d\theta	\\
			\mbf{u}_{x}(a) +\int_{a}^{b}\mbf{u}_{xx}(\theta)d\theta},
	\end{align*}
	and
	\begin{align*}
		&\int_{a}^{b}\mbs{F}(x)\mbf{u}(x)dx
		=\int_{a}^{b}\mbs{F}(x)\bbl(\mbf{u}(a)+(x-a)\mbf{u}_{x}(a)\bbr)\,dx	\\
		&\hspace*{2.8cm}+\int_{a}^{b}\mbs{F}(x)\left[\int_{a}^{x}(x-\theta)\mbf{u}_{xx}(\theta)d\theta\right] dx	\\
		&\qquad=\int_{a}^{b}\bmat{\mbs{F}(x)&\mbs{F}(x)(x-a)}dx\slbmat{\mbf{u}(a)\\\mbf{u}_{x}(a)}	\notag\\
		&\qquad\qquad +\int_{a}^{b}\left[\int_{\theta}^{b}(x-\theta)\mbs{F}(x)d\theta\right] \mbf{u}_{xx}(\theta)d\theta.
	\end{align*}
	Substituting these relations into the boundary conditions defining $X$ in~\eqref{eq:Xset_appx}, we find
	\begin{align*}
		&E{\sclbmat{\mbf{u}(a)\\\mbf{u}(b)\\\mbf{u}_{x}(a)\\\mbf{u}_{x}(b)}}+\int_{a}^{b}\mbs{F}(x)\mbf{u}(x)dx	\\[-0.4em]
		&\hspace*{1.5cm}=G_{E,\mbs{F}}{\slbmat{\mbf{u}(a)\\\mbf{u}_{x}(a)}}  -\int_{a}^{b}\mbs{H}_{E,\mbs{F}}(\theta)\mbf{u}_{xx}(\theta)d\theta,
	\end{align*}
	whence the result holds.
\end{proof}

Using this equivalent representation of the boundary conditions, we can show how, if $G_{E,\mbs{F}}$ is invertible, we can uniquely define an inverse of the differential operator $\partial_{x}^2:X\to L_{2}^{n}$ as a partial integral operator $\mcl{T}:L_{2}^{n}\to X$.

\begin{prop}\label{prop:Tmap_hat_appx}
	Let $E\in\R^{2n\times 4n}$ and $\mbs{F}\in L_{2}^{2n-m\times n}$ be such that $G_{E,\mbs{F}}$ is invertible, and define
	\begin{equation*}
		\mbs{T}_{1}(x,\theta):=\bmat{I_{n}&\!\!\!(x-a)I_{n}}G_{E,\mbs{F}}^{-1} \mbs{H}_{E,\mbs{F}}(\theta).
	\end{equation*}
	Further define $\mcl{T}_{1}:L_{2}^{n}[a,b]\to W_{2}^{2,n}[a,b]$ by
	\begin{align*}
		(\mcl{T}_{1}\mbf{v})(x)&=\int_{a}^{b}\mbs{T}_{1}(x,\theta)\mbf{v}(\theta)d\theta +\int_{a}^{x}(x-\theta)\mbf{v}(\theta)d\theta,			
	\end{align*}
	for $\mbf{v}\in L_{2}^{n}[a,b]$.
	Then the following statements hold:
	\begin{enumerate}
		\item For all $\mbf{u}\in X_{E,\mbs{F}}$, 
		$\mbf{u}=\mcl{T}_{1}\partial_{x}^2\mbf{u}$.
		
		\item For all $\mbf{v}\in L_{2}^{n}$, $\mcl{T}_{1}\mbf{v}\in X_{E,\mbs{F}}$, and $\mbf{v}=\partial_{x}^2 \mcl{T}_{1}\mbf{v}$.
	\end{enumerate}
\end{prop}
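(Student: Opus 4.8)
The plan is to verify both statements by direct computation, leaning on the equivalent boundary-condition representation of Lem.~\ref{lem:BC_expansion_appx}. The key structural observation is that $\mcl{T}_1$ decomposes into an affine-in-$x$ part, $\bmat{I_n & (x-a)I_n}G_{E,\mbs{F}}^{-1}\int_a^b\mbs{H}_{E,\mbs{F}}(\theta)\mbf{v}(\theta)d\theta$, and the standard double-integral part $\int_a^x(x-\theta)\mbf{v}(\theta)d\theta$. The affine part is annihilated by $\partial_x^2$ while the double-integral part is a right-inverse of $\partial_x^2$; the affine part is precisely what forces the output to lie in $X_{E,\mbs{F}}$, and its presence is where the invertibility of $G_{E,\mbs{F}}$ enters.

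For statement (2), I would fix $\mbf{v}\in L_2^n$, set $\mbf{u}:=\mcl{T}_1\mbf{v}$, and first compute $\partial_x^2\mbf{u}$. The affine part has vanishing second derivative, and differentiating $\int_a^x(x-\theta)\mbf{v}(\theta)d\theta$ twice by Leibniz' rule (the boundary term $(x-x)\mbf{v}(x)$ vanishes) yields $\mbf{v}(x)$, so $\partial_x^2\mbf{u}=\mbf{v}$. Next I would evaluate the boundary data: at $x=a$ both the double integral and its first derivative vanish, giving $\mbf{u}(a)=\bmat{I_n & 0}G_{E,\mbs{F}}^{-1}\int_a^b\mbs{H}_{E,\mbs{F}}(\theta)\mbf{v}(\theta)d\theta$ and $\mbf{u}_x(a)=\bmat{0 & I_n}G_{E,\mbs{F}}^{-1}\int_a^b\mbs{H}_{E,\mbs{F}}(\theta)\mbf{v}(\theta)d\theta$. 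Stacking these and using $\bmat{I_n & 0 \\ 0 & I_n}=I_{2n}$ gives $\smallbmat{\mbf{u}(a)\\\mbf{u}_x(a)}=G_{E,\mbs{F}}^{-1}\int_a^b\mbs{H}_{E,\mbs{F}}(\theta)\mbf{v}(\theta)d\theta$, hence $G_{E,\mbs{F}}\smallbmat{\mbf{u}(a)\\\mbf{u}_x(a)}=\int_a^b\mbs{H}_{E,\mbs{F}}(x)\mbf{u}_{xx}(x)dx$, which is exactly the membership condition for $X_{E,\mbs{F}}$ from Defn.~\ref{defn:Gmat_appx}.

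For statement (1), I would fix $\mbf{u}\in X_{E,\mbs{F}}$, set $\mbf{v}:=\partial_x^2\mbf{u}=\mbf{u}_{xx}$, and substitute the membership identity $\int_a^b\mbs{H}_{E,\mbs{F}}(\theta)\mbf{u}_{xx}(\theta)d\theta=G_{E,\mbs{F}}\smallbmat{\mbf{u}(a)\\\mbf{u}_x(a)}$ into the affine part of $\mcl{T}_1\mbf{v}$. The product $G_{E,\mbs{F}}^{-1}G_{E,\mbs{F}}$ cancels, leaving $\bmat{I_n & (x-a)I_n}\smallbmat{\mbf{u}(a)\\\mbf{u}_x(a)}+\int_a^x(x-\theta)\mbf{u}_{xx}(\theta)d\theta$, which is exactly the Taylor expansion $\mbf{u}(a)+(x-a)\mbf{u}_x(a)+\int_a^x(x-\theta)\mbf{u}_{xx}(\theta)d\theta=\mbf{u}(x)$.

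I do not anticipate a genuine obstacle here: with $G_{E,\mbs{F}}$ invertible, both directions reduce to bookkeeping built on Taylor's theorem and the fundamental theorem of calculus. The only point demanding care is the correct evaluation of $\mbf{u}(a)$ and $\mbf{u}_x(a)$ for $\mcl{T}_1\mbf{v}$ -- confirming that the boundary contributions of the double-integral term vanish at $x=a$ and that the two row-selectors assemble into $I_{2n}$ -- since this is the precise step where the definition of $\mbs{T}_1$ through $G_{E,\mbs{F}}^{-1}$ does its work.
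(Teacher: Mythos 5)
Your proof is correct and follows essentially the same route as the paper's: statement (1) via Taylor's theorem with integral remainder plus invertibility of $G_{E,\mbs{F}}$ to solve for $\smallbmat{\mbf{u}(a)\\\mbf{u}_{x}(a)}$, and statement (2) via Leibniz' rule together with the observation that $\mbs{T}_{1}(a,\theta)$ and $\partial_{x}\mbs{T}_{1}(a,\theta)$ stack to $G_{E,\mbs{F}}^{-1}\mbs{H}_{E,\mbs{F}}(\theta)$, recovering the boundary condition defining $X_{E,\mbs{F}}$. If anything, your computation of the second derivative of the double-integral term (correctly yielding $\mbf{v}(x)$) is stated more carefully than the paper's displayed formula, which drops that term through an apparent typo before concluding.
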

\begin{proof}
	To prove the first statement, fix arbitrary $\mbf{u}\in X_{E,\mbs{F}}$, and let $\mbf{v}=\mbf{u}_{xx}$. Since $G_{E,\mbs{F}}\in\R^{2n\times 2n}$ is invertible, and by definition of $X_{E,\mbs{F}}$, $\mbf{u}$ must satisfy
	\begin{equation*}
		\slbmat{\mbf{u}(a)\\\mbf{u}_{x}(a)}=\int_{a}^{b}G_{E,\mbs{F}}^{-1}\mbs{H}_{E,\mbs{F}}(\theta)\mbf{u}_{xx}(\theta)d\theta.
	\end{equation*}
	Using Taylor's theorem with integral form of the remainder, and invoking the definition of $\mbs{T}_{1}(x,\theta)$, it follows that
	\begin{align*}
		&\mbf{u}(x)=\bmat{I_{n}&(x-a)I_{n}}\slbmat{\mbf{u}(a)\\\mbf{u}_{x}(a)} +\int_{a}^{x}(x-\theta)\mbf{u}_{xx}(\theta)d\theta	\\
		&=\int_{a}^{b}\mbs{T}_{1}(x,\theta)\mbf{v}(\theta)d\theta +\int_{a}^{x}(x-\theta)\mbf{v}(\theta)d\theta
		=(\mcl{T}_{1}\mbf{v})(x).
	\end{align*}
	
	Now, to prove the second statement, fix arbitrary $\mbf{v}\in L_{2}^{n}$, and let $\mbf{u}=\mcl{T}_{1}\mbf{v}$. Then, by the Leibniz integral rule,
	\begin{align*}
		&\mbf{u}_{x}(x)=\int_{a}^{b}\partial_{x}\mbs{T}_{1}(x,\theta)\mbf{v}(\theta)d\theta +\int_{a}^{x}\mbf{v}(\theta)d\theta	\\
		&\mbf{u}_{xx}(x)=\int_{a}^{b}\partial_{x}^2\mbs{T}_{1}(x,\theta)\mbf{v}(\theta)d\theta
	\end{align*}
	Here, by definition of the function $\mbs{T}_{1}$, we have $\partial_{x}^2\mbs{T}_{1}(x,\theta)=0$, whence $\mbf{u}_{xx}(x)=0$. 
	Moreover, 
	\begin{equation*}
		\slbmat{\mbs{T}_{1}(a,\theta)\\\frac{\partial}{\partial x}\mbs{T}_{1}(a,\theta)}=G_{E,\mbs{F}}^{-1}\mbs{H}_{E,\mbs{F}}(\theta),
	\end{equation*}
	and therefore $\slbmat{\mbf{u}(a)\\\mbf{u}_{x}(a)}=\int_{a}^{b}G_{E,\mbs{F}}^{-1}\mbs{H}_{E,\mbs{F}}(\theta)\mbf{u}_{xx}(\theta)\,d\theta$.
	It follows that $\mbf{u}\in X_{E,\mbs{F}}$.
\end{proof}

By Prop.~\ref{prop:Tmap_hat_appx}, if $G_{E,\mbs{F}}$ is invertible, we can define a unique map $\mcl{T}_{1}:L_{2}^{n}\to X$ such that $\mcl{T}_{1}\mbf{u}_{xx}=\mbf{u}$ for $\mbf{u}\in X$, and $\partial_{x}^2\mcl{T}_{1}\mbf{v}=\mbf{v}$ for $\mbf{v}\in L_{2}^{n}$. However, if $G_{E,\mbs{F}}$ is not invertible, we will need to use the approach presented in Sec.~\ref{sec:Tmap} to define such a map from $\mbf{u}_{xx}$ back to $\mbf{u}$. In particular, using Lem.~\ref{lem:BC_split}, we can partition $X=X_{E_{1},\mbs{F}_{1}}\cap X_{E_{2},\mbs{F}_{2}}$ such that $G_{E_{1},\mbs{F}_{1}}\in\R^{2n-m\times 2n}$ is of full row rank whereas $G_{E_{2},\mbs{F}_{2}}=0$. Then, using Lem.~\ref{lem:BC_extra}, we can define $\mbs{F}_{3}$ such that $G_{\tlbmat{E_{1}\\0},\tlbmat{\mbs{F}_{1}\\\mbs{F}_{3}}}$ is invertible, allowing us to apply Prop.~\ref{prop:Tmap_hat_appx} to define an inverse $\mcl{T}_{1}$ to the differential operator. Finally, defining $\mcl{T}_{0}$ as in Lem.~\ref{lem:uhat}, we can express any $\mbf{u}\in X_{E_{1},\mbs{F}_{1}}$ as $\mbf{u}=\mcl{T}_{0}\mcl{F}\mbf{u}+\mcl{T}_{1}\partial_{x}^2\mbf{u}$, where $\mcl{F}\mbf{u}:=\int_{a}^{b}\mbs{F}_{3}(x)\mbf{u}(x)dx$, by Thm.~\ref{thm:Tmap}. The following corollary proves that we can also define the derivative $\mbf{u}_{x}$ in terms of $\mscr{D}\mbf{u}=\smallbmat{\mcl{F}\\\partial_{x}^2}\mbf{u}$, using an operator $\mcl{R}$.

\begin{cor}\label{cor:Rmap_appx}
	Let $E_{1}\in\R^{2n\times 4n}$, $\mbs{F}_{1}\in L_{2}^{2n-m\times n}$, and $\mbs{F}_{3}\in L_{2}^{m\times 2n}$ satisfy the properties of Thm.~\ref{thm:Tmap}, and define $\mscr{D}$ and $\mcl{T}$ as in that theorem. Let $\mbs{T}_{1}\in L_{2}^{n\times n}$ and $\mbs{T}_{0}\in L_{2}^{n\times m}$ be as in Prop.~\ref{prop:Tmap_hat} and Lem.~\ref{lem:uhat}, respectively, and define
	\begin{equation*}\Resize{\linewidth}{
			(\mcl{R}\mbf{v})(x):=\partial_{x}\mbs{T}_{0}(x)v_{0} \!+\!\int_{a}^{b}\!\!\partial_{x}\mbs{T}_{1}(x,\theta)\mbf{v}_{1}(\theta)d\theta \!+\!\int_{a}^{x}\!\!\mbf{v}_{1}(\theta)d\theta,}
	\end{equation*}
	for $\mbf{v}=(v_{0},\mbf{v}_{1})\in\R^{m}\times L_{2}^{n}[a,b]$.
	Then the following holds:
	\begin{enumerate}
		\item 
		For all $\mbf{u}\in X_{E_{1},\mbs{F}_{1}}$, we have $\partial_{x}\mbf{u}=\mcl{R}\mscr{D}\mbf{u}$.
		
		\item 
		For all $\mbf{v}\in \R^{m}\times L_{2}^{n}$, we have $\mcl{R}\mbf{v}=\partial_{x}(\mcl{T}\mbf{v})$.		
	\end{enumerate}
\end{cor}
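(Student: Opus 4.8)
The plan is to establish the second statement first by direct computation, and then to derive the first statement from it together with the representation $\mbf{u}=\mcl{T}\mscr{D}\mbf{u}$ coming from Thm.~\ref{thm:Tmap}.

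First I would fix $\mbf{v}=(v_{0},\mbf{v}_{1})\in\R^{m}\times L_{2}^{n}$ and write out $\mcl{T}\mbf{v}=\mcl{T}_{0}v_{0}+\mcl{T}_{1}\mbf{v}_{1}$ explicitly, so that $(\mcl{T}\mbf{v})(x)=\mbs{T}_{0}(x)v_{0}+\int_{a}^{b}\mbs{T}_{1}(x,\theta)\mbf{v}_{1}(\theta)d\theta+\int_{a}^{x}(x-\theta)\mbf{v}_{1}(\theta)d\theta$, using the explicit forms of $\mbs{T}_{0}$ and $\mbs{T}_{1}$ from Lem.~\ref{lem:uhat} and Prop.~\ref{prop:Tmap_hat}. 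I would then differentiate in $x$ term by term. The first term yields $\partial_{x}\mbs{T}_{0}(x)v_{0}$; the fixed-limit integral yields $\int_{a}^{b}\partial_{x}\mbs{T}_{1}(x,\theta)\mbf{v}_{1}(\theta)d\theta$ by differentiating the polynomial kernel under the integral sign; and for the Volterra term the Leibniz integral rule produces a boundary contribution $(x-\theta)|_{\theta=x}\,\mbf{v}_{1}(x)=0$ together with $\int_{a}^{x}\partial_{x}(x-\theta)\mbf{v}_{1}(\theta)d\theta=\int_{a}^{x}\mbf{v}_{1}(\theta)d\theta$. Summing these three contributions reproduces exactly the formula defining $\mcl{R}\mbf{v}$, which establishes the second statement, $\mcl{R}\mbf{v}=\partial_{x}(\mcl{T}\mbf{v})$.

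For the first statement, I would observe that the identity $\mbf{u}=\mcl{T}\mscr{D}\mbf{u}$ actually holds for every $\mbf{u}\in X_{E_{1},\mbs{F}_{1}}$, not merely for $\mbf{u}\in X$. This is because the decomposition argument in the proof of Thm.~\ref{thm:Tmap} never invokes membership in $X_{E_{2},\mbs{F}_{2}}$: setting $\hat{\mbf{u}}:=\mbf{u}-\mcl{T}_{0}\mcl{F}\mbf{u}$, Lem.~\ref{lem:uhat} gives $\hat{\mbf{u}}\in X_{E_{1},\mbs{F}_{1}}$ with $\mcl{F}\hat{\mbf{u}}=0$ and $\hat{\mbf{u}}_{xx}=\mbf{u}_{xx}$, so that $\hat{\mbf{u}}$ lies in the space to which Prop.~\ref{prop:Tmap_hat} applies and hence $\hat{\mbf{u}}=\mcl{T}_{1}\mbf{u}_{xx}$, whence $\mbf{u}=\mcl{T}_{0}\mcl{F}\mbf{u}+\mcl{T}_{1}\mbf{u}_{xx}=\mcl{T}\mscr{D}\mbf{u}$. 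Applying $\partial_{x}$ to both sides and invoking the second statement with $\mbf{v}=\mscr{D}\mbf{u}\in\R^{m}\times L_{2}^{n}$ then gives $\partial_{x}\mbf{u}=\partial_{x}(\mcl{T}\mscr{D}\mbf{u})=\mcl{R}\mscr{D}\mbf{u}$, as desired.

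I expect the only technical obstacle to be the justification of differentiation under the integral sign for the kernel $\mbs{T}_{1}$ and the vanishing of the Leibniz boundary term; both are routine, since $\mbs{T}_{0}$ and $\mbs{T}_{1}$ are polynomial in $x$ and $\mbf{v}_{1}\in L_{2}^{n}$, so the $x$-derivatives of the integrands are dominated uniformly on $[a,b]$. The more conceptual point requiring care is the extension of the decomposition identity of Thm.~\ref{thm:Tmap} from $X$ to the larger domain $X_{E_{1},\mbs{F}_{1}}$, which is precisely what the first statement demands.
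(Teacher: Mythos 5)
Your proposal is correct and follows essentially the same route as the paper: statement 2 is proved by term-by-term differentiation of $\mcl{T}\mbf{v}$ via the Leibniz integral rule, and statement 1 then follows by applying this identity to $\mbf{v}=\mscr{D}\mbf{u}$, using the fact that the decomposition argument in the proof of Thm.~\ref{thm:Tmap} yields $\mbf{u}=\mcl{T}\mscr{D}\mbf{u}$ on all of $X_{E_{1},\mbs{F}_{1}}$, not just on $X$. Your explicit justification of that last point (re-running the decomposition through Lem.~\ref{lem:uhat} and Prop.~\ref{prop:Tmap_hat} without ever invoking $X_{E_{2},\mbs{F}_{2}}$) is in fact spelled out more carefully than in the paper, which simply cites ``the proof of Thm.~\ref{thm:Tmap}.''
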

\begin{proof}
	To prove both statements, note that by Leibniz' integral rule, we have for all $\mbf{v}\in \R^{m}\times L_{2}^{n}[a,b]$ that
	\begin{align}\label{eq:Rmap_proof}
		&\partial_{x} (\mcl{T}\mbf{v})(x)	\tag{*}\\
		&=\partial_{x}\!\left[\!\mbs{T}_{0}(x)v_{0}+\int_{a}^{b}\!\!\mbs{T}_{1}(x,\theta)\mbf{v}_{1}(\theta)d\theta +\int_{a}^{x}\!\!(x-\theta)\mbf{v}_{1}(\theta)d\theta\!\right]	\notag\\
		&=\partial_{x}\mbs{T}_{0}(x)v_{0}+\!\int_{a}^{b}\!\!\partial_{x}\mbs{T}_{1}(x,\theta)\mbf{v}_{1}(\theta)d\theta +\!\int_{a}^{x}\!\!\mbf{v}_{1}(\theta)d\theta \notag\\
		&=(\mcl{R}\mbf{v})(x).		\notag
	\end{align}
	Thus, the second statement holds.
	Now, for the first statement, fix arbitrary $\mbf{u}\in X_{E_{1},\mbs{F}_{1}}$. Then, by the proof of Thm.~\ref{thm:Tmap}, $\mbf{u}=\mcl{T}\mscr{D}\mbf{u}$, so invoking the identity in~\eqref{eq:Rmap_proof} it follows that $\mbf{u}_{x}=\mcl{R}\mscr{D}\mbf{u}$. 
\end{proof}

Using this corollary, as well as Thm.~\ref{thm:Tmap}, we can construct an equivalent PIE representation of a broad class of linear, 2nd order PDEs. In particular, we consider PDEs of the form
\begin{align}\label{eq:PDE_standard_appx}
	\mbf{u}_{t}(t,x)&=\mbs{A}_{0}(x)\mbf{u}(t,x) +\!\mbs{A}_{1}(x)\mbf{u}_{x}(t,x) +\!\mbs{A}_{2}(x)\mbf{u}_{xx}(t,x),	\notag\\
	\mbf{u}(t)&\in X,\hspace*{4.0cm} t\geq 0.
\end{align}
We define an associated PIE representation of this system as
\begin{align}\label{eq:PIE_standard_appx}
	\partial_{t}\hat{\mcl{T}}\mbf{v}(t)&=\hat{\mcl{A}}\mbf{v}(t),	&	\mbf{v}(t)\in Y,\quad t\geq 0.
\end{align}
where $\hat{\mcl{T}}:=\sclbmat{I_{m}\\\mcl{T}}$ and $\hat{\mcl{A}}:=\sclbmat{\mcl{F}\circ\mcl{A}\\\mcl{A}}$, with
\begin{equation}\label{eq:Aop_appx}
	\mcl{A}:=\tnf{M}_{\mbs{A}_{0}}\circ\mcl{T} +\tnf{M}_{\mbs{A}_{1}}\circ\mcl{R} +\tnf{M}_{\mbs{A}_{2}}\circ\mcl{I},
\end{equation}
where we let $\mcl{I}\smallbmat{v_{0}\\\mbf{v}_{1}}:=\mbf{v}_{1}$.
The following theorem proves that this PIE representation is equivalent to the PDE.

\begin{thm}\label{thm:PDE2PIE_appx}
	For given $\mbs{A}_{0},\mbs{A}_{1},\mbs{A}_{2}\in\R^{n\times n}[x]$, $E\in\R^{2n\times 4n}$ and $\mbs{F}\in\R^{2n\times n}[x]$, define associated operators $\{\mcl{T},\mcl{F},\mcl{K}\}$ and $\mscr{D}$ as in Thm.~\ref{thm:Tmap}, and $\mcl{A}$ as in~\eqref{eq:Aop_appx}, and let $\hat{\mcl{T}}:=\sclbmat{I_{m}\\\mcl{T}}$ and $\hat{\mcl{A}}:=\sclbmat{\mcl{F}\circ\mcl{A}\\\mcl{A}}$.
	Then, the following holds:
	\begin{enumerate}
		\item If $\mbf{u}$ solves the PDE defined by $\{\mbs{A}_{i},E,\mbs{F}\}$ with initial state $\mbf{u}^{0}\in X$, then $\mbf{v}=\mscr{D}\mbf{u}$ solves the PIE defined by $\{\hat{\mcl{T}},\hat{\mcl{A}},\mcl{K}\}$ with initial state $\mbf{v}^{0}=\mscr{D}\mbf{u}^{0}$.
		
		\item If $\mbf{v}$ solves the PIE defined by $\{\hat{\mcl{T}},\hat{\mcl{A}},\mcl{K}\}$ with initial state $\mbf{v}^{0}$, then $\mbf{u}:=\mcl{T}\mbf{v}$ solves the PDE defined by $\{\mbs{A}_{i},E,\mbs{F}\}$ with initial state $\mbf{u}^{0}=\mcl{T}\mbf{v}^{0}$.
	\end{enumerate} 
\end{thm}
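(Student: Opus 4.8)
The plan is to reduce both implications to a single operator identity furnished by the preceding results: for any $\mbf{u}\in X$ and its image $\mbf{v}=\mscr{D}\mbf{u}\in Y$ (equivalently, for any $\mbf{v}\in Y$ and $\mbf{u}=\mcl{T}\mbf{v}\in X$), the PI operator $\mcl{A}$ reproduces the spatial right-hand side of the PDE, i.e. $\mcl{A}\mbf{v}=\mbs{A}_{0}\mbf{u}+\mbs{A}_{1}\mbf{u}_{x}+\mbs{A}_{2}\mbf{u}_{xx}$. This is immediate from the definition $\mcl{A}=\tnf{M}_{\mbs{A}_{0}}\circ\mcl{T}+\tnf{M}_{\mbs{A}_{1}}\circ\mcl{R}+\tnf{M}_{\mbs{A}_{2}}\circ\mcl{I}$ together with $\mcl{T}\mbf{v}=\mbf{u}$ (Thm.~\ref{thm:Tmap}), $\mcl{R}\mbf{v}=\mbf{u}_{x}$ (Cor.~\ref{cor:Rmap}, applicable since $X\subseteq X_{E_{1},\mbs{F}_{1}}$ by Lem.~\ref{lem:BC_split}), and $\mcl{I}\mbf{v}=\mbf{v}_{1}=\mbf{u}_{xx}$. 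Once this identity is available, each direction amounts to matching the two rows of the stacked operators $\hat{\mcl{T}}=\sclbmat{I_{m}\\\mcl{T}}$ and $\hat{\mcl{A}}=\sclbmat{\mcl{F}\circ\mcl{A}\\\mcl{A}}$.

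For the first statement I would take a PDE solution $\mbf{u}$, set $\mbf{v}:=\mscr{D}\mbf{u}$, and verify the three requirements of a PIE solution. Membership $\mbf{v}(t)\in Y$ and $\mbf{v}(0)=\mbf{v}^{0}:=\mscr{D}\mbf{u}^{0}$ follow from Thm.~\ref{thm:Tmap}, and the constraint $\mcl{K}\mbf{v}(t)=0$ is then automatic since it defines $Y$. Fréchet differentiability of $\mbf{v}$ transfers from that of $\mbf{u}$ because $\mscr{D}=\sclbmat{\mcl{F}\\\partial_{x}^2}$ is bounded from the $W_{2}^{2,n}$-topology of $X$ into $Y$, so differentiability of $t\mapsto\mbf{u}(t)$ yields differentiability of $t\mapsto\mbf{v}(t)$. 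It then remains to check $\partial_{t}\hat{\mcl{T}}\mbf{v}=\hat{\mcl{A}}\mbf{v}$. Since $\hat{\mcl{T}}\mbf{v}=(\mcl{F}\mbf{u},\mcl{T}\mbf{v})=(\mcl{F}\mbf{u},\mbf{u})$ and $\mcl{F}$ is a bounded functional, $\partial_{t}\hat{\mcl{T}}\mbf{v}=(\mcl{F}\mbf{u}_{t},\mbf{u}_{t})$; the core identity gives $\mbf{u}_{t}=\mcl{A}\mbf{v}$, and applying $\mcl{F}$ to both sides produces the top row, so both components of the PIE hold.

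For the second statement I would reverse the construction, taking a PIE solution $\mbf{v}$ and setting $\mbf{u}:=\mcl{T}\mbf{v}$. Thm.~\ref{thm:Tmap} gives $\mbf{u}(t)\in X$ and $\mbf{v}=\mscr{D}\mbf{u}$ (so in particular $\mbf{u}_{xx}=\mbf{v}_{1}$ and $\mcl{F}\mbf{u}=v_{0}$), while $\mbf{u}(0)=\mcl{T}\mbf{v}^{0}=\mbf{u}^{0}$; differentiability of $\mbf{u}$ now follows from boundedness of $\mcl{T}:Y\to X$. Reading off the second row of the PIE gives $\mbf{u}_{t}=\partial_{t}\mcl{T}\mbf{v}=\mcl{A}\mbf{v}$, and the core identity rewrites the right-hand side as $\mbs{A}_{0}\mbf{u}+\mbs{A}_{1}\mbf{u}_{x}+\mbs{A}_{2}\mbf{u}_{xx}$, which is exactly~\eqref{eq:PDE_standard_appx}; membership $\mbf{u}(t)\in X$ supplies the boundary conditions. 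The top row $\dot{v}_{0}=\mcl{F}\mcl{A}\mbf{v}$ is then redundant, being merely $\mcl{F}$ applied to the second row under $v_{0}=\mcl{F}\mbf{u}$.

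I expect the only genuine subtlety, rather than a true obstacle, to lie in the differentiability bookkeeping: one must read Fréchet differentiability of $\mbf{u}$ in the $W_{2}^{2,n}$-topology of $X$ so that $\partial_{x}^{2}\mbf{u}$, and hence $\mbf{v}$, is differentiable in $L_{2}^{n}$, and conversely track that the bounded operators $\mcl{T}$ and $\mscr{D}$ carry these time-derivatives across the isomorphism. All the analytically hard content has already been absorbed into Thm.~\ref{thm:Tmap} and Cor.~\ref{cor:Rmap}, so the remaining work is the mechanical row-by-row verification described above.
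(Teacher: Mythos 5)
Your proposal is correct and follows essentially the same route as the paper's own proof: both use Thm.~\ref{thm:Tmap} and Cor.~\ref{cor:Rmap_appx} to obtain $\mbf{u}=\mcl{T}\mbf{v}$, $\mbf{u}_{x}=\mcl{R}\mbf{v}$, $\mbf{u}_{xx}=\mcl{I}\mbf{v}$, substitute these into the definition of $\mcl{A}$ to match the PDE right-hand side with $\mcl{A}\mbf{v}$, and recover the top row of the PIE by applying $\mcl{F}$ (using $\mcl{F}\circ\mcl{T}\smallbmat{v_{0}\\\mbf{v}_{1}}=v_{0}$). Your additional care about transferring Fr\'echet differentiability through the bounded operators $\mscr{D}$ and $\mcl{T}$ is a point the paper glosses over, but it does not change the structure of the argument.
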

\begin{proof}
	To prove the first statement, fix arbitrary $\mbf{u}^{0}\in X$, and let $\mbf{u}$ be an associated solution to the PDE~\eqref{eq:PDE_standard_appx}. Then $\mbf{u}(t)\in X\subseteq W_{2}^{2,n}[a,b]$ for all $t\geq 0$, and we can define $\mbf{v}(t)=\smallbmat{v_{0}(t)\\\mbf{v}_{1}(t)}:=\mscr{D}\mbf{u}(t)$. Clearly, then, $\mbf{v}^{0}=\mscr{D}\mbf{u}^{0}$. In addition, by Thm.~\ref{thm:Tmap} and Cor.~\ref{cor:Rmap_appx}, $\mbf{v}(t)\in Y$, and we have $\mbf{u}(t)=\mcl{T}\mbf{v}(t)$, $\mbf{u}_{x}(t)=\mcl{R}\mbf{v}(t)$ and $\mbf{u}_{xx}(t)=\mcl{I}\mbf{v}(t)$ for all $t\geq 0$. Since $\mbf{u}$ satisfies~\eqref{eq:PDE_standard_appx}, it follows that
	\begin{align*}
		&\partial_{t}\mcl{T}\mbf{v}(t)
		=\mbf{u}_{t}(t)	
		=\tnf{M}_{\mbs{A}_{0}}\mbf{u}(t) +\tnf{M}_{\mbs{A}_{1}}\mbf{u}_{x}(t) +\tnf{M}_{\mbs{A}_{2}}\mbf{u}_{xx}(t)	\\
		&\hspace*{1.0cm}=\bl(\tnf{M}_{\mbs{A}_{0}}\circ\mcl{T} +\tnf{M}_{\mbs{A}_{1}}\circ\mcl{R} +\tnf{M}_{\mbs{A}_{2}}\circ\mcl{I}\br)\mbf{v}(t)
		=\mcl{A}\mbf{v}(t).
	\end{align*}
	Applying the operator $\mcl{F}$ to both sides of this equation, and recalling that $\mcl{F}\circ \mcl{T}\smallbmat{v_{0}\\\mbf{v}_{1}}=v_{0}$, it follows that also $\dot{v}_{0}(t)=(\mcl{F}\circ\mcl{A})\mbf{v}(t)$.
	By definition of the operators $\hat{\mcl{T}}$ and $\hat{\mcl{A}}$, then, $\mbf{v}(t)$ is a solution to the PIE defined by $\{\hat{\mcl{T}},\hat{\mcl{A}},\mcl{K}\}$.
	
	For the second statement, fix arbitrary $\mbf{v}^{0}\in Y$, and let $\mbf{v}=(v_{0},\mbf{v}_{1})$ be an associated solution to the PIE~\eqref{eq:PIE_standard_appx}. Define $\mbf{u}(t)=\mcl{T}\mbf{v}(t)$ for all $t\geq 0$. Then, by Thm.~\ref{thm:Tmap} and Cor.~\ref{cor:Rmap_appx}, $\mbf{u}(t)\in X$, and we have $\mbf{u}_{x}(t)=\mcl{R}\mbf{v}(t)$ and $\mbf{u}_{xx}(t)=\mcl{I}\mbf{v}(t)$, for all $t\geq 0$. It follows that, for all $t\geq 0$,
	\begin{align*}
			\mbf{u}_{t}(t)
			=\partial_{t}\mcl{T}\mbf{v}(t)		
			&=\mcl{A}\mbf{v}(t)				\\
			&=\bl(\tnf{M}_{\mbs{A}_{0}}\circ\mcl{T} +\tnf{M}_{\mbs{A}_{1}}\circ\mcl{R} +\tnf{M}_{\mbs{A}_{2}}\circ\mcl{I}\br)\mbf{v}(t)		\\
			&=\tnf{M}_{\mbs{A}_{0}}\mbf{u}(t) +\tnf{M}_{\mbs{A}_{1}}\mbf{u}_{x}(t) +\tnf{M}_{\mbs{A}_{2}}\mbf{u}_{xx}(t).
		\end{align*}
	Thus $\mbf{u}$ is a solution to the PDE defined by $\{\mbs{A}_{i},E,\mbs{F}\}$.
\end{proof}

\section{Exponential Stability of PDE Examples}

\subsection{Reaction Diffusion Equation with Periodic BCs}\label{appx:reaction_diffusion}

Consider the reaction-diffusion equation with periodic boundary conditions,
\begin{align*}
	\mbf{u}_{t}(t,x)&=\mbf{u}_{xx}(t,x) +\lambda\mbf{u}(t,x),	&	&\hspace*{-1cm}x\in(-1,1),	\\
	\mbf{u}(t,-1)&=\mbf{u}(t,1),\quad \mbf{u}_{x}(t,-1)=\mbf{u}_{x}(t,1),						&	t&\geq 0.
\end{align*}
We can solve this equation using separation of variables. Specifically, let $\mbf{u}(t,x)=T(t)X(x)$. If $\mbf{u}(t,x)$ satisfies the PDE dynamics, then the functions $T$ and $X$ must satisfy
\begin{equation*}
	T'(t)X(x)=T(t)[X''(x) +\lambda X(x)]
\end{equation*}
and therefore
\begin{equation*}
	\frac{T'(t)}{T(t)}-\lambda =\frac{X''(x)}{X(x)}.
\end{equation*}
It follows that both sides must be equal to some constant $-c\in\R$, so that
\begin{equation*}
	T'(t)=[\lambda-c]T(t),
	\qquad
	X''(x)=-cX(x).
\end{equation*}
Invoking the periodic boundary conditions, we find that the ODE $X''(x)=-cX(x)$ admits non-trivial solutions only for $c=c_{n}=n^2\pi^2$ for $n\in\N_{0}$, taking the form
\begin{equation*}
	X_{0}(x):=\beta_{0},\quad X_{n}(x):=\alpha_{n}\sin(n\pi x)+\beta_{n}\cos(n\pi x),
\end{equation*}
for $\alpha_{n},\beta_{n}\in\R$.
For each $n\in\N_{0}$, we obtain a corresponding solution to the ODE $T'(t)=[\lambda-c_{n}]T(t)$ as
\begin{equation*}
	T_{n}(t)=\gamma_{n}e^{[\lambda-c_{n}]t}=\gamma_{n}e^{[\lambda-n^2\pi^2]t}
\end{equation*}
for $\gamma_{n}\in\R$. Thus, solutions to the PDE take the form 
\begin{align*}
	\mbf{u}(t,x)
	&=b_{0}e^{\lambda t}+\sum_{n=1}^{\infty}a_{n}e^{[\lambda-n^2\pi^2]t}\sin(n\pi x)	\\
	&\qquad +\sum_{n=1}^{\infty}b_{n}e^{[\lambda-n^2\pi^2]t}\cos(n\pi x)
\end{align*}
for coefficients $a_{n},b_{n}$ determined by the initial state $\mbf{u}(0)$. 
From this solution, it is clear that the origin $\mbf{u}\equiv 0$ for the PDE is stable if and only if $\lambda\leq 0$, with exponential decay rate $\alpha=\lambda$.
However, defining $\mcl{F}\mbf{u}(t):=\frac{1}{2}\int_{-1}^{1}\mbf{u}(t)$, we note that $\mcl{F}\mbf{u}(t)=b_{0}e^{\lambda t}$, and therefore
\begin{align*}
	\mbf{u}(t,x)-\mcl{F}\mbf{u}(t)
	&=\sum_{n=1}^{\infty}a_{n}e^{[\lambda-n^2\pi^2]t}\sin(n\pi x)	\\
	&\qquad +\sum_{n=1}^{\infty}e^{[\lambda-n^2\pi^2]t}b_{n}e^{[\lambda-n^2\pi^2]t}\cos(n\pi x).
\end{align*}
It follows that, for any $\lambda<\min_{n\in\N}n^2\pi^2=\pi^2$, solutions $\mbf{u}(t)$ do converge to the uniform state $\mcl{F}\mbf{u}(t)$, with exponential rate of decay $\alpha=\lambda-\min_{n\in\N}n^2\pi^2=\lambda-\pi^2$.

\subsection{Wave Equation with Neumann BCs}\label{appx:wave_eq}

Consider a modified wave equation with Neumann boundary conditions,
\begin{align*}
	\mbf{u}_{tt}(t,x)&=\mbf{u}_{xx}(t,x)-2k\mbf{u}_{t}(t,x) +\lambda\mbf{u}(t,x),	& &x\in(0,1),	\\
	\mbf{u}_{x}(t,0)&=\mbf{u}_{x}(t,1)=0,	&	&t\geq 0.
\end{align*}
We can solve this equation using separation of variables. Specifically, let $\mbf{u}(t,x)=T(t)X(x)$. Then, the functions $T$ and $X$ must satisfy
\begin{equation*}
	T''(t)X(x)=-2kT'(t)X(x) +T(t)[X''(x) +\lambda X(x)],
\end{equation*}
and therefore
\begin{equation*}
	\frac{T''(t)+2kT'(t)}{T(t)}-\lambda =\frac{X''(x)}{X(x)}.
\end{equation*}
It follows that both sides of this equation must be equal to some constant $-c\in\R$, so that
\begin{equation*}
	T''(t)=[\lambda-c]T(t) -2kT'(t),
	\qquad
	X''(x)=-cX(x).
\end{equation*}
Invoking the Neumann boundary conditions, we find that the ODE $X''(x)=-cX(x)$ admits non-trivial solutions only for $c=c_{n}=n^2\pi^2$ for $n\in\N_{0}$.
We obtain corresponding solutions to the ODE $T''(t)=[\lambda-c]T(t) +2kT'(t)$ as
\begin{align*}
	T_{n}(t)&=\gamma_{n} e^{(-k-\sqrt{(\lambda-c_{n})+k^2})t} +\delta_{n} e^{(-k+\sqrt{(\lambda-c_{n})+k^2})t}	\\
	&=e^{-kt}\bbl[\gamma_{n} e^{-t\sqrt{(\lambda-n^2\pi^2)+k^2}} +\delta_{n} e^{t\sqrt{(\lambda-n^2\pi^2)+k^2}}]
\end{align*}
We find that $T_{n}(t)$ is non-increasing only if $k\geq 0$ and $(\lambda-n^2\pi^2)\leq 0$. Setting $\lambda=-k^2$, the solutions become
\begin{equation*}
	T_{n}(t)=e^{-kt}\bbl[\gamma_{n} e^{-in\pi t} +\delta_{n} e^{in\pi t}\bbr].
\end{equation*}
It follows that the solutions to the original PDE will decay exponentially to zero only for $k>0$, with rate $\alpha=k$.

\end{appendices}

\end{document}